\numberwithin{equation}{section}
\def\cyr{%
\renewcommand\rmdefault{wncyr}%
\renewcommand\sfdefault{wncyss}%
\renewcommand\encodingdefault{OT2}%
\normalfont
\selectfont}
\DeclareMathAlphabet{\zap}{OT1}{pzc}{m}{it}
\DeclareTextFontCommand{\textcyr}{\cyr}
\newcommand{\rad}{\text{\cyr   ya}}
\newcommand{\Rad}{\text{\sc r}}
\newcommand{\dom}{\text{\cyr   D}}
\newcommand{\starry}{\text{\ding{73}}}
\def\uhs{\mathcal{H}^3}
\def\ZZ{\mathbb Z}
\def\RR{\mathbb R}
\def\CC{\mathbb C}
\def\HH{\mathbb H}
\def\TT{\mathbb{T}}
\DeclareMathAlphabet{\zap}{OT1}{pzc}{m}{it}
\def\CP{\mathbb{CP}}
\def\eea{\end{eqnarray*}}
\DeclareMathOperator{\dist}{dist}
\newtheorem{main}{Theorem}
\newtheorem{thm}{Theorem}[section]
\newtheorem{lem}[thm]{Lemma}
\newtheorem{prop}[thm]{Proposition}
\newtheorem{defn}[thm]{Definition}
\newenvironment{proof}{\medskip \noindent
{\bf Proof.}}{\hfill \rule{.5em}{1em}
\\}
\newenvironment{xpl}{\mbox{ }\\ {\bf  Example}\mbox{ }}{
\hfill $\diamondsuit$\mbox{}\bigskip}
\begin{document}
\sloppy

\title{Anti-Self-Dual $4$-Manifolds, Quasi-Fuchsian\\   Groups, and Almost-K\"ahler Geometry}

\author{Christopher J. Bishop\thanks{Supported 
in part by 
NSF Grant DMS-1608577.}
\quad and \quad Claude LeBrun\thanks{Supported 
in part by  NSF grant DMS-1510094.} 
\\ Department of Mathematics, 
Stony Brook University
  }

\date{}
\maketitle

\hspace{1.8in}
\begin{minipage}{2.4in}
\begin{quote} {\em For Karen Uhlenbeck, in celebration  of
her seventy-fifth birthday. }
\end{quote}
\end{minipage}

\bigskip 

\begin{abstract}
It is known that the  almost-K\"ahler anti-self-dual metrics on a given $4$-manifold sweep out   an open subset in the moduli space
of anti-self-dual metrics. However, we show here by example
that this subset is not  generally  closed, and so need not  sweep out entire
connected components in the moduli space. Our construction  hinges on an unexpected link between 
 harmonic functions on certain hyperbolic $3$-manifolds and self-dual harmonic $2$-forms
on  associated $4$-manifolds. 
\end{abstract} 

\section{Introduction} 
An oriented Riemannian $4$-manifold $(M^4,g)$ is said to be {\em anti-self-dual} if it satisfies $W_+=0$, where 
the self-dual Weyl  curvature $W_+$ is by definition the orthogonal projection  of the Riemann curvature tensor $\mathcal{R}\in \odot^2\Lambda^2$
into the  trace-free symmetric square $\odot^2_0\Lambda^+$ of the bundle of self-dual $2$-forms. 
This is a conformally invariant condition, and so is best understood as a condition on 
 the conformal
class $[g]:=\{ u^2 g ~|~ u: M \to \RR^+\}$ rather than on the representative metric $g$. 
For example, any oriented {\em locally-conformally-flat} $4$-manifold is anti-self-dual; indeed,  these are precisely the $4$-manifolds that are anti-self-dual
with respect to {\em both} orientations. One compelling reason for the study of anti-self-dual $4$-manifolds is that $W_+=0$ is exactly the integrability 
condition needed to make the total space of the $2$-sphere bundle $S(\Lambda^+)\to M$ into a complex $3$-fold $Z$ in a natural way \cite{AHS},
and  the {\em Penrose correspondence} \cite{pnlg} then allows one to completely reconstruct 
$(M^4,[g])$ from the complex geometry of the so-called {twistor space} $Z$.

A rather different  link between anti-self-dual metrics and complex geometry is provided by the observation that
a K\"ahler manifold $(M^4,g,J)$ of complex dimension two is anti-self-dual if and only if its scalar curvature vanishes \cite{lebicm}. 
This makes a special class of extremal K\"ahler manifolds  susceptible to study via twistor theory, and 
 led, in the early 1990s,  to various results on scalar-flat K\"ahler surfaces that anticipated  more recent 
theorems regarding more general  extremal K\"ahler manifolds.

Of course,  the K\"ahler condition is far from  conformally invariant. Rather, in the present context, it should be thought of as providing a 
preferred conformal gauge for those special conformal classes that can be represented by K\"ahler metrics. But
even among anti-self-dual conformal classes, those that can be represented by K\"ahler metrics tend to be 
highly non-generic. The following example  nicely illustrates this phenomenon. 

\begin{xpl}
Let $\Sigma$ be a compact Riemann surface of genus ${\zap g}\geq 2$, and let $M$ denote the compact oriented $4$-manifold 
$\Sigma \times \CP_1$. If we equip $\Sigma$ with its hyperbolic metric of Gauss curvature $K=-1$ and equip $S^2=\CP_1$
with its usual ``round'' metric of Gauss curvature $K=+1$, the product metric on $M= \Sigma \times S^2$ is scalar-flat
K\"ahler, and hence anti-self-dual. Moreover, since this metric admits orientation-reversing isometries, it is actually locally 
conformally flat. This last fact illustrates a useful consequence \cite{AHS} of the $4$-dimensional signature formula:
if $M$ is a compact oriented $4$-manifold (without boundary) that has  signature $\tau (M)=0$, then every anti-self-dual 
metric on $M$ is locally conformally flat. 

Now the scalar-flat K\"ahler metric we have just described on $M=\Sigma \times S^2$ has universal cover $\mathcal{H}^2\times S^2$,
where $\mathcal{H}^2$ denotes the hyperbolic plane. In fact,  one can show \cite{burbar,lsd} that {\em every} scalar-flat K\"ahler metric
on $\Sigma \times S^2$  has universal cover homothetically isometric to this fixed model. Thus,
starting  with the representation $\pi_1(M)= \pi_1(\Sigma ) \hookrightarrow PSL(2,\RR) = SO_+(2,1)$ 
that uniformizes $\Sigma$, 
 any deformation of our example through 
 scalar-flat K\"ahler metrics  of fixed total volume arises  from a deformation though  representations $\pi_1(\Sigma ) \hookrightarrow SO_+(2,1)\times SO(3)$.  
On the other hand,  $\mathcal{H}^2\times S^2$ is conformally isometric to the complement $S^4-S^1$ of an equatorial circle in the 
$4$-sphere, so the general deformation of the locally conformally flat structure  on $M=\Sigma \times S^2$  instead just corresponds to a  family of 
homomorphisms  $\pi_1(\Sigma ) \hookrightarrow SO_+(5,1)$ taking values in  the group of M\"obius transformations
of the $4$-sphere. Remembering to only  count these representations up to conjugation, we see  that the moduli space of 
anti-self-dual conformal classes on $M$ has dimension $30 ({\zap g}-1)$, but that only a subspace of  dimension $12 ({\zap g}-1)$ 
arises from conformal structures that contain scalar-flat K\"ahler metrics. 
\end{xpl}

Because of this, we can hardly expect scalar-flat-K\"ahler metrics to provide a reliable model for general 
anti-self-dual metrics, even  on $4$-manifolds that arise as compact complex surfaces. However, 
the larger class of {\em almost-K\"ahler} anti-self-dual metrics shares
many of the remarkable properties of the scalar-flat K\"ahler metrics, and yet sweeps out an {\sf open} region in the moduli space of anti-self-dual conformal structures. 

Recall that  an oriented  Riemannian manifold $(M,g)$ equipped with a closed 
$2$-form $\omega$ 
is said to be {\em almost-K\"ahler} if there is  an orientation-compatible almost-complex structure $J: TM\to TM$, $J^2=-\mathbf{1}$, such that 
$g=\omega (\cdot , J\cdot )$; in this language,  a K\"ahler manifold  just becomes an almost-K\"ahler manifold for which the 
almost-complex structure $J$ happens to be  integrable.  But because $J$ is algebraically determined by $g$ and $\omega$, there is an equivalent  reformulation that avoids  mentioning $J$ explicitly. Indeed, 
an oriented Riemannian $2m$-manifold $(M,g)$ is almost-K\"ahler with respect to the closed $2$-form $\omega$ iff 
    $\ast \omega = \omega^{m-1}/(m-1)!$ and $|\omega|= \sqrt{m}$, where the Hodge star and pointwise norm on $2$-forms are those determined by  $g$ and the 
    fixed orientation. 
In particular, we see that  $\omega$ is a  {\sf harmonic}  $2$-form on $(M^{2m},g)$, and that  $\omega$ is an orientation-compatible symplectic form
on $M$. 

But this also makes it  clear   that the $4$-dimensional case is  transparently  simple  and  natural. Indeed, 
a compact oriented Riemannian $4$-manifold $(M,g)$ is almost-K\"ahler with respect to $\omega$ iff $\omega$ is a {\sf self-dual harmonic $2$-form}
on $(M^4,g)$ of {\sf constant length }$\sqrt{2}$. However, since the Hodge  star  operator is conformally invariant on middle-dimensional forms,  
a $2$-form on a $4$-manifold is harmonic with respect to  $g$ iff it is harmonic with respect to  every other metric in the conformal class $[g]$. Since 
a conformal change of metric $g\rightsquigarrow u^2g$ changes the point-wise norm of a $2$-form by $|\omega|\rightsquigarrow u^{-2} |\omega|$, 
this means that any  harmonic self-dual form $\omega$ that is simply {\sf everywhere non-zero} determines
a unique $\hat{g}= u^2g\in [g]$ such that  $(M,\hat{g})$  is almost-K\"ahler with respect to $\omega$. Moreover, 
the dimension 
$b_+(M) = [b_2(M) + \tau(M)]/2$ of the space of self-dual harmonic  $2$-forms is a topological invariant of $M$, and 
the collection of these forms depends continuously on the space of Riemannian metrics (with respect, for example, to the $C^{1,\alpha}$ topology). 
Thus, if there is a nowhere-zero self-dual harmonic $2$-form $\omega$ with respect to $g$, the same is true for every metric 
$\tilde{g}$  that is sufficiently close to $g$ in the $C^2$ topology. It therefore follows that  the almost-K\"ahler metrics sweep out an open subset of
the  space of conformal classes on any smooth compact 
$4$-manifold $M^4$. We emphasize that this is a strictly $4$-dimensional phenomenon; it certainly does not persist in higher dimensions.

Because of this,  almost-K\"ahler anti-self-dual metrics can provide   an interesting window into the 
world of general anti-self-dual metrics, at least   on $4$-manifolds that happen to admit symplectic structures.
Since  the anti-self-duality condition $W_+=0$ largely compensates for the  extra freedom in the curvature tensor
that would otherwise result from relaxing the K\"ahler condition,  many features familiar from the scalar-flat K\"ahler case
turn out to  persist in this broader context. 
One  particularly intriguing consequence is that it is 
  not difficult to find obstructions to the existence of  almost-K\"ahler anti-self-dual metrics, even though the known obstructions to the existence of general anti-self-dual metrics are few and far between.  For example, while we know \cite{klp,rolsing} that there are  scalar-flat K\"ahler metrics (and hence anti-self-dual metrics) 
 on $\CP_2 \# k \overline{\CP}_2$ for $k\geq 10$, we also know \cite{inyoungagag,lebdelpezzo} that  almost-K\"ahler anti-self-dual metrics definitely {\sf do not} exist on such blow-ups
of the complex projective plane  at $k\leq 9$ points. Is the latter indicative of a deeper non-existence theorem for 
 general anti-self-dual metrics? Or is   this merely the sort of false hope that  arises  from staring too long at a  mirage? 
 
 This paper will try to shed some light on these  matters by  investigating a related question. 
  If a smooth compact $4$-manifold admits an almost-K\"ahler anti-self-dual metric, is every anti-self-dual metric
 in the same component of the moduli space also almost-K\"ahler? Since we have seen that the almost-K\"ahler condition is open on the level of conformal classes, 
 this question amounts asking whether it is also {\sf closed} in the  anti-self-dual context. Our results will show  that the answer is  {\sf no}. Indeed, we will display 
 a large family of counter-examples that arise  from the theory of quasi-Fuchsian groups.

\begin{main}
\label{main1}
 There is an  integer $N$ such that, whenever 
 $\Sigma$ is a compact oriented  surface of even genus ${\zap g}\geq N$,  the $4$-manifold $M=\Sigma \times S^2$ admits
 locally-conformally-flat conformal classes $[g]$ that  cannot be represented by  almost-K\"ahler metrics. 
Moreover, certain   such $[g]$  arise from quasi-Fuchsian groups $\pi_1(\Sigma ) \hookrightarrow SO_+(3,1)\subset SO_+(5,1)$, and so 
can be exhibited  as locally-conformally-flat deformations of the conformal structures represented by  scalar-flat K\"ahler product metrics on $\Sigma \times S^2$. 
\end{main}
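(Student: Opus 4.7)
I would realize the desired LCF conformal classes as quotients of $S^4$ by quasi-Fuchsian groups and reduce the almost-K\"ahler question to the existence of critical points of an explicit harmonic function on an associated hyperbolic 3-manifold. Embed $SO_+(3,1)\hookrightarrow SO_+(5,1)$ as the stabilizer of a totally geodesic $\mathcal H^3\subset\mathcal H^5$, equivalently of a round 2-sphere $S^2_\circ\subset S^4$; its centralizer is a circle $SO(2)\subset SO_+(5,1)$ rotating the normal directions to $S^2_\circ$. A quasi-Fuchsian $\rho\colon\pi_1(\Sigma)\hookrightarrow SO_+(3,1)$ with image $\Gamma$ then acts on $S^4$ preserving $S^2_\circ$ with limit set a quasi-circle $\Lambda\subset S^2_\circ$, and by continuation along the connected quasi-Fuchsian space from the Fuchsian locus, $M:=(S^4\setminus\Lambda)/\Gamma$ is a compact 4-manifold diffeomorphic to $\Sigma\times S^2$ with an LCF conformal class depending smoothly on $\rho$. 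Via the conformal isometry $S^4\setminus S^2_\circ\cong\mathcal H^3\times S^1$, on which $\Gamma$ acts by its Kleinian action on the first factor, the interior of $M$ is identified with $N\times S^1$ for $N:=\mathcal H^3/\Gamma$; the two conformal components at infinity of $N$ produce, after collapse of the $S^1$-factor, the boundary surfaces $\Omega_\pm/\Gamma\cong\Sigma$ along which $M$ is compactified. The centralizer descends to a conformal $S^1$-action on $M$.

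\textbf{Reduction to a Dirichlet problem.} Next I would carry out a direct Hodge computation on $\mathcal H^3\times S^1$ showing that for any function $f$ on $\mathcal H^3$ the $S^1$-invariant 2-form
\[ \omega_f \;:=\; \ast_3 df \;+\; df\wedge d\theta \]
is self-dual, is harmonic iff $\Delta f=0$, and satisfies $|\omega_f|^2 = 2|df|^2$, so that the zero set of $\omega_f$ coincides with the critical set of $f$. Since $b_+(\Sigma\times S^2)=1$, the self-dual harmonic 2-forms on $M$ form a one-dimensional space; averaging under the conformal $S^1$-action forces the unique class (up to scale) to be $S^1$-invariant, hence of the form $\omega_{f_\Gamma}$ for some $\Gamma$-invariant harmonic function $f_\Gamma$ on $\mathcal H^3$, equivalently a harmonic function on $N$. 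Smooth extension of $\omega_{f_\Gamma}$ across the collapsing boundary surfaces forces $f_\Gamma$ to take locally constant limits $\pm 1$ on the two components of $\partial_\infty N$, determining $f_\Gamma$ uniquely as the bounded Dirichlet solution on the convex-cocompact manifold $N$. The Hopf boundary-point lemma prevents $\nabla f_\Gamma$ from vanishing on $\partial_\infty N$, so the conformal class on $M$ fails to be represented by an almost-K\"ahler metric \emph{if and only if} $f_\Gamma$ has an interior critical point.

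\textbf{Fuchsian baseline and what must be constructed.} I would then verify that in the Fuchsian case, Fermi coordinates $t$ from the invariant geodesic plane $\mathcal H^2\subset\mathcal H^3$ give the explicit solution $f_\Gamma=\tanh t$, whose differential $\mathrm{sech}^2(t)\,dt$ is nowhere zero; indeed, the associated conformal rescaling recovers precisely the scalar-flat K\"ahler product metric $g_{\mathcal H^2}+g_{S^2}$. A perturbation argument preserves this property for small quasi-Fuchsian deformations, so the theorem becomes a statement about suitably \emph{large} deformations: given any quasi-Fuchsian $\Gamma$ whose $f_\Gamma$ has an interior critical point, the connectedness of quasi-Fuchsian space places the corresponding LCF conformal class in the same component of the moduli space as the Fuchsian (product-K\"ahler) one, while the critical point precludes almost-K\"ahler representation, which is exactly the claim.

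\textbf{Main obstacle: producing critical points.} The essential technical step, which I expect to be the main obstacle, is to construct, for every sufficiently large even genus $\zap g$, a quasi-Fuchsian group $\Gamma\subset SO_+(3,1)$ whose bounded harmonic function $f_\Gamma$ on $N$ really does have an interior critical point. The natural strategy is to choose the two boundary conformal structures so that $N$ develops a long, thin, and geometrically well-controlled ``neck'' across which the prescribed boundary values $\pm 1$ must equilibrate, and then to argue---via a Morse-theoretic or maximum-principle comparison, in conjunction with sharp Kleinian-group estimates---that $\nabla f_\Gamma$ must vanish somewhere on or near the neck. The even, large-genus hypothesis should enter because building such a neck symmetrically requires enough topological room, for instance to accommodate two half-surfaces of genus $\zap g/2$. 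The quantitative nature of the required conclusion---producing an honest zero of $\nabla f_\Gamma$ rather than merely a small value---is precisely where the complex-analytic deformation theory of quasi-Fuchsian groups, together with fine estimates on the Laplacian on conformally compact hyperbolic 3-manifolds, would need to be brought to bear.
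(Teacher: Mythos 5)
Your reduction is essentially the one the paper uses: the quotient $(S^4-\Lambda)/\varGamma$, the identification of the unique (up to scale) self-dual harmonic $2$-form with an $S^1$-invariant form built from a harmonic function on $\mathcal{H}^3/\varGamma$ with boundary values locally constant on the two domains of discontinuity (the paper's ``tunnel-vision function,'' i.e.\ the harmonic measure of $\Omega_+$), the identity $|\omega_f|^2=2|df|^2$, and the conclusion that the class fails to be conformally almost-K\"ahler if and only if that harmonic function has an interior critical point. One caveat on the boundary step: a bare appeal to the Hopf lemma is not quite right, since the harmonic function is harmonic for the hyperbolic metric, not for the compactified one, and its first normal derivative at $\partial\overline{X}$ actually \emph{vanishes}; what one needs is that it vanishes to exactly second order, which the paper proves (Lemma \ref{expansion}) by sandwiching the harmonic measure between the explicit Fuchsian solutions for a disk inside $\Omega_+$ and a disk containing $\Omega_+$. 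This is fixable, but it is not automatic.

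The genuine gap is your final step. You correctly identify that everything hinges on producing, for all sufficiently large even genus, a quasi-Fuchsian group whose bounded harmonic function has an interior critical point, but you do not prove it: you offer a ``long thin neck'' heuristic and explicitly defer the quantitative work. Without this step there is no theorem, since critical points are exactly what must be shown to exist, and a perturbative argument can never produce them (as you yourself note, they are absent near the Fuchsian locus). The paper's mechanism is concrete and different from your sketch: it works on the sphere at infinity rather than with a neck in the $3$-manifold. One fixes a ``dogbone'' Jordan curve --- two disks of radius $1/4$ centred at $\pm1$ joined by a corridor of width $2\epsilon^3$ --- invariant under $\zeta\mapsto-\zeta$, and approximates it in Hausdorff distance by limit sets of Bers-type groups of every sufficiently large even genus (this approximation lemma is itself nontrivial: it extends the Beltrami coefficient of a Riemann map equivariantly over a Fuchsian group whose fundamental $4{\zap g}$-gon contains a Euclidean disk of radius $1-\delta$, which is where both the largeness and, via compatibility of the side identifications with the $180^\circ$ rotation, the evenness of ${\zap g}$ enter). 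The $\ZZ_2$-symmetry forces the gradient of the harmonic measure to be tangent to the vertical geodesic over the origin, and elementary Poisson-kernel estimates show $f(0,0,\epsilon)<f(0,0,1)$ while $f\to1$ as $z\searrow0$ and $f\to0$ as $z\nearrow\infty$; non-monotonicity along the axis then yields two honest critical points. To complete your proposal you would need either to reproduce an argument of this kind or to make your neck/Morse-theoretic comparison rigorous, including a quantitative lower bound forcing $\nabla f_\varGamma$ to vanish; as written, the central existence assertion is only conjectured.
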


While the examples  described by this result are all locally conformally flat, and thus live on $4$-manifolds of signature zero, a variant of the same construction produces many 
explicit examples that live on $4$-manifolds with $\tau < 0$, and so are certainly {\sf not} locally conformally flat. 
These arise in connection with  the second author's explicit construction \cite{leblown} of scalar-flat K\"ahler metrics on 
blown-up ruled surfaces. The main idea is to deform  the hyperbolic $3$-manifolds that played a central role in the earlier
construction, by replacing Fuchsian with quasi-Fuchsian subgroups of $PSL(2, \CC)$.

\begin{main}  
\label{main2}
Let $k\geq 2$ be an integer, and 
let $N$ be the integer of Theorem \ref{main1}. 
Then if $\Sigma$ is a compact oriented  surface of even genus ${\zap g}\geq N$, 
 the connected sum $M= (\Sigma \times S^2) \# k \overline{\CP}_2$ admits anti-self-dual conformal structures
 that  cannot be represented by  almost-K\"ahler metrics. Moreover, some such $[g]$ can be explicitly constructed from 
 configurations of $k$ points in  quasi-Fuchsian hyperbolic $3$-manifolds diffeomorphic to   $\Sigma \times \RR$, and 
so   can be exhibited  as anti-self-dual deformations  of  conformal structures that are represented by  scalar-flat K\"ahler structures on blown-up ruled surfaces. 
\end{main}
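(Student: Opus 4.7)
My plan is to adapt the strategy of Theorem \ref{main1} to the setting of LeBrun's hyperbolic-monopole construction \cite{leblown} of scalar-flat K\"ahler metrics on blown-up ruled surfaces. First, I would recall the Fuchsian case: given the hyperbolic $3$-manifold $N^3 = \Sigma \times \RR$ and a choice of $k$ distinct points $p_1, \ldots, p_k \in N^3$, one forms a positive harmonic function
\[
V = V_0 + \sum_{i=1}^k G_{p_i}
\]
on $N^3 \setminus \{p_1, \ldots, p_k\}$, where $V_0$ is the positive background harmonic function governing the unblown picture and the $G_{p_i}$ are suitably normalized Green's functions. The principal $S^1$-bundle with connection of curvature $\ast dV$ over the punctured $3$-manifold, equipped with the Gibbons--Hawking-type metric $V\, g_{N^3} + V^{-1}\theta^2$, compactifies across the $k$ punctures and across the conformal boundary to produce the scalar-flat K\"ahler manifold $M = (\Sigma \times S^2) \# k\overline{\CP}_2$.

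Second, I would deform from Fuchsian to quasi-Fuchsian. The underlying smooth $3$-manifold $\Sigma \times \RR$ remains fixed, but the hyperbolic metric varies in a real-analytic family. The Green's functions, the positive harmonic background $V_0$, and the whole Gibbons--Hawking ansatz deform continuously, producing a continuous family of anti-self-dual conformal classes $[g_t]$ on the fixed smooth $4$-manifold $M$, with $t=0$ the scalar-flat K\"ahler starting point and $t=1$ genuinely quasi-Fuchsian. Smoothness of this family across the $k$ punctures and across the conformal boundary will follow from a direct check, since quasi-Fuchsian $3$-manifolds share the same local models as Fuchsian ones.

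The main obstacle is to show that, for quasi-Fuchsian deformations sufficiently far from Fuchsian and for ${\zap g} \geq N$, the class $[g_t]$ cannot be represented by an almost-K\"ahler metric. Because $\tau(M) = -k$ and $b_2(M) = 2+k$, one has $b_+(M) = 1$, so up to scale there is a unique $g$-self-dual harmonic $2$-form $\omega_g$ for each $g \in [g_t]$, and being almost-K\"ahler reduces to the condition that $\omega_g$ be nowhere zero. The unexpected link advertised in the abstract should let me rewrite $\omega_g$ on the principal $S^1$-bundle as data built from a harmonic function $u$ on the quasi-Fuchsian $N^3$, so that the zero locus of $\omega_g$ corresponds to the failure of positivity of $u$. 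I expect the hard step to be showing that, just as in Theorem \ref{main1}, once the quasi-Fuchsian deformation is large enough relative to ${\zap g}\geq N$, no everywhere-positive harmonic function with the required growth at the conformal boundary and singular behaviour at the $p_i$ can exist on $N^3$; the $k$ Green's function singularities contribute strictly positive terms which cannot rescue an obstruction that already holds in the unblown case. It is precisely this reduction to the same analytic non-existence statement underlying Theorem \ref{main1} that forces the same integer $N$ to appear in both theorems.
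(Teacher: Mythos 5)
Your overall architecture is right---use the hyperbolic ansatz over a quasi-Fuchsian $X\approx\Sigma\times\RR$ with $k$ marked points, note that $b_+=1$ so the almost-K\"ahler question reduces to the nowhere-vanishing of the unique self-dual harmonic $2$-form, and translate that into a statement about a harmonic function on $X$---but two of your key steps are off in ways that matter.

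First, you have misidentified the vanishing criterion. The self-dual harmonic $2$-form on $(M,[g])$ restricts to the circle bundle as $\omega=d{\zap f}\wedge\theta+V\star d{\zap f}$, where ${\zap f}$ is the tunnel-vision function (the bounded harmonic function on $X$ with boundary values $0$ and $1$ on the two ends). Its pointwise norm is a positive multiple of $|d{\zap f}|_h$, so $\omega$ vanishes exactly at the \emph{critical points} of ${\zap f}$---and ${\zap f}$ does not see the configuration $\{p_1,\dots,p_k\}$ or the Green's functions at all; those enter only through $V$ and the connection $\theta$, which do not affect the zero locus. So the ``hard step'' you set up---ruling out an everywhere-positive harmonic function with prescribed singularities at the $p_i$---is the wrong problem. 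The correct reduction (Theorem \ref{pika}) is that $[g]$ is conformally almost-K\"ahler iff ${\zap f}$ has no critical points, which is literally the same condition as in the unblown case; that, and not any cancellation argument about the Green's function terms, is why the same $N$ appears in both theorems.

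Second, you have omitted the quantization condition, without which the construction does not exist. The circle bundle $P\to X-\{p_1,\dots,p_k\}$ with curvature $\star dV$ exists only if $[\frac{1}{2\pi}\star dV]$ is an integral class, which by Lemma \ref{quantize} amounts to $\sum_j{\zap f}(p_j)\in\ZZ$. This is a nontrivial constraint (it is exactly why $k\geq 2$ is required), and it is not preserved by an arbitrary continuous deformation of the group and the points: as the group moves from Fuchsian to quasi-Fuchsian, ${\zap f}$ changes, and $\sum_j{\zap f}(p_j)$ will generically drift off the integers, at which point there is no bundle and no metric. The paper's proof of Theorem \ref{main2} is essentially devoted to fixing this: using Lemma \ref{expansion} one finds a $\delta>0$, uniform along the path of groups, such that ${\zap f}$ has no critical values in $(0,\delta)\cup(1-\delta,1)$; one first moves the configuration inside the Fuchsian manifold so that $\ell$ of the points satisfy ${\zap f}>1-\delta$ and the rest satisfy ${\zap f}<\delta$; and one then transports the configuration along the deformation by a $t$-dependent family of diffeomorphisms of these collar regions built from the gradient flow of $\sqrt{{\zap f}(1-{\zap f})}$, so that quantizability is maintained for all $t$. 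Some such argument is needed to make your ``the whole ansatz deforms continuously'' step true.
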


\bigskip 

\noindent {\bf Dedication and acknowledgments:} This paper is dedicated to Karen Uhlenbeck, on the occasion of her $75^{\rm th}$ birthday. 
While Karen is of course primarily known for her deep  contributions to geometric analysis, her role in 
encouraging   the work of younger mathematicians has also been a consistent feature of her long and fruitful career. 
The second author, whose taste and interests have been significantly shaped and influenced  by Uhlenbeck's work, 
would therefore like to take this opportunity  to thank Karen for the encouragement she offered him at the outset
of this project. On the other hand, the  first author, who had  presumably been off Prof.\ Uhlenbeck's radar for many years, would  like to 
belatedly thank her  for having passed him on his  
topology oral exam at the University of Chicago in 1984, while offering  this paper as evidence  
that this decision might not have been a mistake after all. Finally, the authors would  like to 
thank many other  colleagues, including   Dennis Sullivan, Ian Agol,  and  Inyoung Kim,   for 
helpful and stimulating conversations, as well as  the  anonymous referees for  carefully reading of the paper and 
suggesting  some  useful clarifications of our 
  exposition.

\section{Fuchsian and Quasi-Fuchsian Groups}

Let $\mathcal{H}^3$ denote hyperbolic $3$-space, which we will visualize using either  the Poincar\'e ball model or  the upper-half-space model. 
Either way, we see a $2$-sphere at infinity; in the Poincar\'e model, it is simply the boundary $2$-sphere of the closed $3$-ball $D^3$, 
while in the upper-half-space model it 
becomes the boundary plane plus an extra point,  called $\infty$.  In either picture,  isometries of $\mathcal{H}^3$ extend to 
to the boundary sphere as conformal transformations, and  every global conformal transformation of $S^2$ conversely arises  this way from a 
unique  isometry. We will henceforth choose to emphasize isometries of $\mathcal{H}^3$ and conformal maps of $S^2=\CP_1$ that {\sf preserve orientation}. 
The group of such isometries is then exactly the  complex automorphism group ${PSL}(2, \CC)$ of $\CP_1$; the fact that this 
 can be identified with the Lorentz group ${SO}_+(3,1)$ (which most naturally acts on yet a third model for $\mathcal{H}^3$,  the hyperboloid of unit future-pointing 
 time-like vectors in Minkowski space)  is
one of those rare low-dimensional   coincidences in Lie group theory  that  underlie many important phenomena in   low-dimensional geometry and topology. 
Another relevant coincidence is that  ${SO}_+(5,1)= PGL(2,\HH)$, so that    oriented conformal transformations of the $4$-sphere can be understood as 
fractional linear transformation of the quaternionic projective line $\mathbb{HP}_1$; thus the natural exension  $SO_+(3,1)\hookrightarrow SO_+(5,1)$ 
of conformal transformations from $S^2$ 
to $S^4$  can also be understood as arising from the  inclusion $PSL(2, \CC)\hookrightarrow PGL(2,\HH)$ induced by including the 
complex numbers $\CC$ into the quaternions $\HH$.

A {\em Kleinian group} $\varGamma$ is by definition \cite{maskit} a discrete subgroup of  ${PSL}(2, \CC)$. Since {\sf discrete} means that the identity element is 
isolated,  this implies that the orbit of any point in  $\mathcal{H}^3\subset D^3$ can only accumulate on the boundary sphere. 
The set  of accumulation points of any orbit 
 is called the {\sf limit set}, and  denoted $\Lambda  = \Lambda (\varGamma)$;  this can easily be shown to be  independent of the particular orbit we choose.
A {\em Fuchsian group}  is by definition a Kleinian group which sends some geometric disk $D^2\subset S^2$ to itself; and since the boundary circle of any such disk is
 the image of $\mathbb{RP}^1 \subset \CP_1$ under a M\"obius transformation, 
 this is equivalent to saying that a Fuchsian group is a Kleinian group which is conjugate to a subgroup of
 ${PSL}(2, \RR)$.  
   In this case, the limit set $\Lambda (\varGamma)$ must be a closed subset of the invariant circle. A Fuchsian group is said to be of the {\sf first type} if $\Lambda (\varGamma)$  is the whole circle. 
   A Kleinian group $\varGamma$ is called {\em quasi-Fuchsian} if it is {\sf quasiconformally conjugate} to a Fuchsian group $\varGamma^\prime$ of the first type, meaning that 
   $\varGamma = \Phi^{-1}  \circ \varGamma^\prime  \circ \Phi$  for some quasiconformal homeomorphism $\Phi$ of $\CP_1$. In particular, the limit set of such a group is a 
   {\sf quasi-circle}, meaning  a 
   Jordan curve that is the image of a geometric circle under some quasiconformal map. 
    This implies \cite{gehringvais} 
   that the limit set has Hausdorff dimension $<2$, and so in particular has 
   Lebesgue area zero. We note in passing  that there are other  equivalent characterizations of such groups; for example,  
   a finitely generated Kleinian group is quasi-Fuchsian if and only if its limit set is a closed Jordan curve.

\begin{figure}[htb]
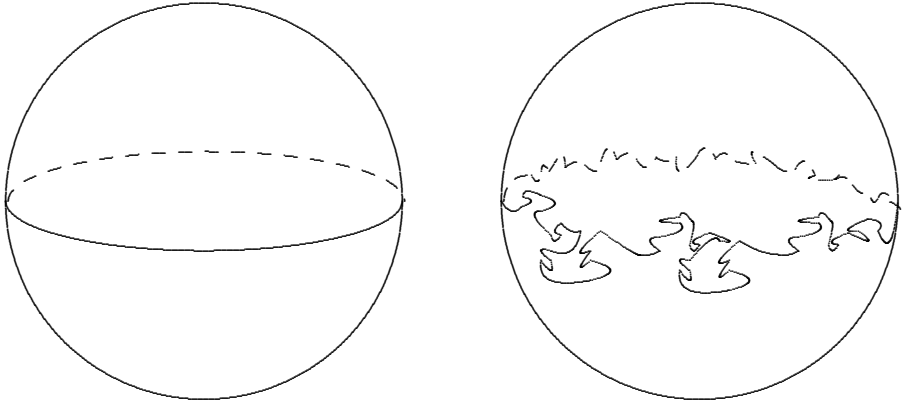

\centerline{
\beginpicture
\setplotarea x from -50 to 400, y from 40 to 180 
\startrotation by 0.75 0 about 0 0
\circulararc -360 degrees from 0 150  center at 100 150 
\circulararc -360 degrees from 250 150  center at 350 150 
\ellipticalarc axes ratio 4:1 180 degrees from 0 150 
center at 100 150 
{\setquadratic 
\plot  250 150   255 145   260 147 /
\plot 260 147   265 150   261 152    /
\plot 261 152   263 155 270 153  /
\plot 270 153  277 149  270 145 /
\plot 270 145  267 142  275 133 /
\plot 275 133 277 130  280 133 /
\plot  280 133 277 135 280 139  /
\plot   280 139  285 135 290 135 /
\plot   290 135  285 125 275 125 /
\plot   275 125   270 125  275 120 /
\plot   275 120  275 115 270 120 /
\plot   270 120   275 110  295 110 /
\plot    295 110 305 115 295 120 /
\plot  295 120 290 117 295 125 /
\plot   295 125 290 125 300  135 /
\plot   300  135 315 125 320 123 /
\plot  320 123   325 123   330 125 /
\plot  330 125   325 130   330 132 /
\plot 330 132   340 135   340 139    /
\plot 340 139   330 140 340 142   /
\plot 340 142  342 144  346 140 /
\plot 346 140  347 137  345 128 /
\plot 345 128 347 125  350 128 /
\plot  350 128 347 130 355 134  /
\plot   355 134  365 130 360 130 /
\plot   360 130 355 130 345 120 /
\plot  345 120  340 120  345  115 /
\plot   345 115  345 110 340 115 /
\plot   340 115   345 105  365 105 /
\plot    365 105 375 110 365 115 /
\plot  365 115 360 112 365 120  /
\plot   365 120  360 120 370  130 /
\plot   370  130 380 125  390 123 /
\plot  390 123   395 123   400 125 /
\plot  400 125   395 130   400 132 /
\plot 400 132   410 135   410 139    /
\plot 410 139   400 140 410 142   /
\plot 410 142  412 144  416 140 /
\plot 416 140  417 137  415 128 /
\plot 415 128 417 125  420 128 /
\plot  420 128 417 130 425 134  /
\plot   425 134  435 130 430 135 /
\plot   430 135 425 136 435 141 /
\plot   435 141 441 139 440 135 /
\plot   440 135 446 130 450 150 /
}
{\setdashes 
\ellipticalarc axes ratio 4:1 -180 degrees from -4 150
center at 96 150
\setquadratic 
\plot  246 150   250 160   260 163 /
\plot  260 163   263 160   265 165 /
\plot  265 165   266 168   269 165 /
\plot  269 165   275 173   277 167 /
\plot   277 167  283 170 287 168 /
\plot   287 168  293 165   295 170 /
\plot    295 170  299 177   303 171 /
\plot    303 171  308 174   313 172 /
\plot    313 171  318 167   323 172 /
\plot   323 172  328 170   333 171 /
\plot   333 172  335 165  343 172 /
\plot   343 172  348 177   353 171 /
\plot   353 172  358 174  363 170 /
\plot   363 170  368 169   373 169 /
\plot   373 169  372 175   383 168 /
\plot   383 168  388 165   393 166 /
\plot   393 166  398 170  403 163 /
\plot   403 163  401 161   415 161  /
\plot   415 163  414 162   413 161  /
\plot   413 161  418 162   422 158    /
\plot   422 158  427 154  430 158 /
\plot   435 152  432 150   430 158 /
\plot   440 153  437 154   435 152 /
\plot   446 146  444 148   440 150 /
}
\endpicture
}
\caption{\label{limitsets}
While the limit set of a type-one Fuchsian group is a geometric  circle, for a strictly  quasi-Fuchsian group it is instead 
a quasi-circle that is a self-similar  Jordan curve  of  Hausdorff dimension $>1$.}
\end{figure}

   The special class of quasi-Fuchsian groups $\varGamma$ that will concern us here consists of  those   $\varGamma$  that are 
   group-isomorphic to the fundamental group $\pi_1(\Sigma )$
   of some
    compact oriented surface $\Sigma$ of genus ${\zap g}\geq 2$. We will call these\footnote{Since the standard terminology would instead describe such $\varGamma$  as 
    {\sf finitely-generated  convex-co-compact  
    quasi-Fuchsian groups without  elliptic elements}, the introduction of a shorter name seems both necessary and appropriate! } 
     quasi-Fuchsian groups of {\em Bers type}, in honor of Lipman Bers' pioneering contribution
   to the subject. Given    two orientation-compatible complex structures ${\zap j}$ and ${\zap j}^\prime$ on $\Sigma$, Bers \cite{bersim} 
   showed that there is a  quasi-Fuchsian group 
   $\pi_1(\Sigma) \hookrightarrow PSL(2, \CC)$ with  limit set a quasi-circle $\Lambda$,  
   such that $(\CP_1-\Lambda) /\pi_1(\Sigma) $ is biholomorphic to the  disjoint union $(\Sigma, {\zap j}) \sqcup (\Sigma, -{\zap j}^\prime)$. 
   The hyperbolic manifold $X=\mathcal{H}^3/\pi_1(\Sigma )$ is then diffeomorphic 
   to $\Sigma \times (-1,1)$, and the  $3$-manifold-with-boundary $\overline{X}:=(D^3 - \Lambda)/\pi_1(\Sigma)$, which is diffeomorphic to $\Sigma \times [-1,1]$,
   carries a conformal structure which extends the conformal class  of the hyperbolic  metric  on  $X\subset \bar{X}$ and
    induces the two specified conformal structures on the two boundary components $\Sigma \times \{ \pm 1\}$.   The quasi-Fuchsian group that accomplishes this
    is moreover unique up to conjugation in $PSL(2, \CC)$, so that these Bers groups are classified by pairs of points in Teichm\"uller space.

 \begin{figure}[htb]
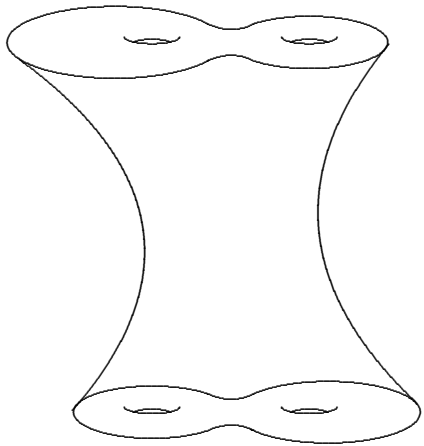

\centerline{
 \beginpicture
\startrotation by 0.7 0 about 150 100
\setplotarea x from 0 to 300, y from 40 to 200
\ellipticalarc axes ratio 3:1  300 degrees from 150 240
center at 100 230
\ellipticalarc axes ratio 3:1  -270 degrees from 175 240
center at 205 230
\ellipticalarc axes ratio 4:1 -180 degrees from 135 233
center at 120 233
\ellipticalarc axes ratio 4:1 145 degrees from 130 230
center at 120 229
\ellipticalarc axes ratio 4:1 180 degrees from 190 233
center at 205 233
\ellipticalarc axes ratio 4:1 -145 degrees from 195 230
center at 205 229
\ellipticalarc axes ratio 3:1  270 degrees from 150 40
center at 120 30
\ellipticalarc axes ratio 3:1  -290 degrees from 175 40
center at 215 30
\ellipticalarc axes ratio 4:1 -180 degrees from 135 33
center at 120 33
\ellipticalarc axes ratio 4:1 145 degrees from 130 30
center at 120 29
\ellipticalarc axes ratio 4:1 180 degrees from 190 33
center at 205 33
\ellipticalarc axes ratio 4:1 -145 degrees from 195 30
center at 205 29
{\setquadratic 
\plot 150 40    163 37    175 40   /
\plot 150 20    163 23    175 20  /
\plot 150 240    163 237    175 240   /
\plot 150 220    163 223    175 220  /
\plot 263 35    210 130   248 229   /
\plot 77 31    115 130   48 221   /
}
\endpicture
}
\caption{\label{quasimodo}
If $\varGamma$ is a quasi-Fuchsian group 
  of Bers type, the associated hyperbolic $3$-manifold $\mathcal{H}^3/\varGamma$  
  has two ends, and is characterized by the  conformal structures it induces on the   two associated 
surfaces at infinity. In the Fuchsian case, the  two conformal structures at infinity  coincide.}
\end{figure}

\section{Constructing Anti-Self-Dual    $4$-Manifolds}

We will now construct a menagerie of  explicit anti-self-dual $4$-manifolds, starting  from any quasi-Fuchsian group $\varGamma\subset PSL (2, \CC)$ of Bers type. 
Recall that our definition requires $\varGamma$ to 
have 
 limit set  $\Lambda (\varGamma) \subset \CP_1$ equal to  a quasi-circle, and to be group-isomorphic to 
$\pi_1(\Sigma )$    for some  compact oriented surface $\Sigma$ of  genus ${\zap g}\geq 2$.
 Let  $X=\mathcal{H}^3/\varGamma$ denote the associated 
hyperbolic $3$-manifold, and let $\overline{X}:=(D^3 - \Lambda)/\varGamma$ be its canonical compactification as   a $3$-manifold-with-boundary. 
We will use $h$ to denote the hyperbolic metric on $X$, and will let $[\bar{h}]$ denote the conformal structure on $\overline{X}$ induced
by the Euclidean conformal structure on $D^3$, all the while remembering that the restriction of  $[\bar{h}]$ to the interior
$X$ of $\overline{X}$ is just  the conformal class $[h]$ of the hyperbolic metric $h$.

    The simplest version of our construction proceeds by defining $P$ to be the $4$-manifold $X\times S^1$, and  then compactifying this  as  
    $M=(\overline{X}\times S^1)/\negthickspace\sim$, where the equivalence relation $\sim$ collapses $(\partial \overline{X}) \times S^1$ to $\partial \overline{X}$ by contracting each
    circle factor to a point. As a set, $M$ is therefore just the disjoint union of $P$ and  $\partial \overline{X}$.  The point of interest, though, 
    is that the topological space $M$ can be made into a smooth $4$-manifold in a way that simultaneously endows it with a     locally-conformally-flat 
    conformal 
   structure. To see this, let us first  recall that the Riemannian product ${\mathcal H}^3 \times S^1$ is conformally flat, since 
    \begin{equation}
\label{model}
z^2\left( \frac{dx^2+dy^2+dz^2}{z^2} + dt^2\right) =  (dx^2+ dy^2)+ (dz^2 + z^2 dt^2)
\end{equation}
    displays a certain conformal rescaling of   its metric as the Euclidean metric on $\RR^4-\RR^2 = \RR^2 \times (\RR^2 -\{ 0\})$, 
    written in cylindrical coordinates. 
%    Notice moreover that this identifies $\mathcal{H}^3 \times S^1$ with  $\RR^4-\RR^2=S^4-S^2$, and 
%    that this picture also identifies the missing $2$-sphere with the sphere at infinity of $\mathcal{H}^3$. 
   We can generalize this by letting  $${\zap u}: \overline{X}\to [0,\infty )$$ be a smooth defining function for $\partial \overline{X}$, in the sense that  $\partial \overline{X}= {\zap u}^{-1}(\{ 0\})$
   and that $d{\zap u}\neq 0$ along $\partial \overline{X}$. Inspection of \eqref{model}  then demonstrates that 
   \begin{equation}
\label{ansatz0}
g={\zap u}^2(h+dt^2)
\end{equation}
defines a smooth locally-conformally-flat
   metric on $M$, since near any boundary point of $\overline{X}$ we can choose local coordinates $(x,y,z)$ that express $h$  in the upper-half-space model, 
   and  we then automatically have  ${\zap u}={\zap w}z$ for some smooth positive function ${\zap w}$ on the corresponding coordinate domain. Of course, the metric 
   $g$ defined by \eqref{ansatz0} depends on the defining function ${\zap u}$, but its conformal class $[g]$ does not. This gives 
    $M=(\overline{X}\times S^1)/\negthickspace\sim$ the structure of a smooth oriented locally-conformally-flat $4$-manifold in a very natural manner. It follows  that 
 $M$ is  diffeomorphic to $\Sigma \times S^2$, since  $\overline{X}$ is diffeomorphic to $\Sigma \times [-1,1]$, and  collapsing each boundary
 circle of  $[-1,1]\times S^1$  exactly produces a $2$-sphere $S^2$.

    Of course, none of this should come as a  surprise, since the inclusion 
    $$\begin{array}{ccc}PSL (2,\CC) & \hookrightarrow & PGL(2,\HH ) \\ \| &  & \|  \\ SO_+(3,1) & \hookrightarrow &  SO_+(5,1)\end{array}$$
   exactly allows $\varGamma$ to act on $S^4= \mathbb{HP}_1$ in a manner that is free and properly discontinuous outside 
   the limit set  $\Lambda = \Lambda (\varGamma) \subset S^2\subset S^4$. 
  The smooth compact $4$-manifold $M$ we constructed above is exactly $(S^4-\Lambda)/\varGamma$, and the locally-conformally-flat conformal structure $[g]$ 
  with which we 
  endowed it  is simply the push-forward of the  standard conformal structure on $S^4$. However, the approach we have just detailed 
  has certain specific  virtues; not only does it 
      nicely  generalize to yield a construction of more general  anti-self-dual $4$-manifolds, but it will also lead, in  \S \ref{harmandammer} below,  to  a concrete picture of   
    harmonic $2$-forms on $(M,[g])$ in terms  of harmonic functions on $(X,h)$. 
    
    Before  proceeding further,   we should notice some other key features of the metrics $g$ defined by \eqref{ansatz0}. The vector field $\xi = \partial/\partial t$ is
    a {\em Killing field} of $g$, and generates an isometric action of $S^1=U(1)$ on $(M,g)$. We can usefully restate this by  observing that  $\xi$ is a {\em conformal Killing field} of
    $(M,[g])$, and  that the special metrics $g\in [g]$ given by  \eqref{ansatz0} are simply the metrics in the fixed conformal class
     that  are invariant under the induced circle action. 
    Now observe that $\overline{X}=M/S^1$, and that the inverse image  $P$ of $X\subset \overline{X}$  is a flat principle circle
    bundle --- namely, the trivial one!  We now mildly generalize the construction by   instead  considering {\sf arbitrary} {flat} principal $S^1$-bundles $(P,\theta)$ over $X$. 
     On such a principal bundle,  there is  still a  vector field $\xi$ that generates the free $S^1$-action on $P$, and  saying that $\theta$ is a connection $1$-form
      just means that it's  an $S^1$-invariant $1$-form on $P$ such that $\theta(\xi) \equiv 1$; requiring  that such a  connection be  {\sf flat} then imposes the condition 
     that $d\theta=0$, and, since this then says  that $\theta$ is locally exact,    is  obviously  equivalent to saying that there is a system of local trivializations of $P$ in which 
     $\theta = dt$ and $\xi=\partial/\partial t$. Given any smooth defining function ${\zap u}: \overline{X}\to [0,\infty )$ for $\partial \overline{X}$, 
       the  local arguments we used before now show that  we can compactify  
    $(P, {\zap u}^2[h+\theta^2])$ as a smooth  locally-conformally-flat $4$-manifold $(M,g)$ by adding a copy of $\partial\overline{X}$. 
    On the other hand,  the extra freedom of choosing a flat $S^1$-connection on $X$ is completely encoded in a monodromy homomorphism 
    $\pi_1(X) \to S^1 = SO(2)$, and since $\pi_1(X)=\pi_1(\Sigma ) \cong \varGamma$, the graph of such a homomorphism  is 
     a subgroup $\widetilde{\varGamma}\subset SO_+(3,1)\times SO(2)$
    that projects bijectively to the quasi-Fuchsian group $\varGamma\subset SO_+(3,1)$. Identifying $\widetilde{\varGamma}$ with its image under the natural inclusion 
     $SO_+(3,1)\times SO(2)\hookrightarrow SO_+(5,1)$ then allows  $\widetilde{\varGamma}$ to act conformally on $S^4$ with the same limit set $\Lambda\subset S^2\subset S^4$ as $\varGamma$,      
     and the additional locally-conformally-flat structures on $M\approx \Sigma \times S^2$  introduced above are   exactly the ones that  then arise as  quotients 
     $(S^4-\Lambda) /\widetilde{\varGamma}$. 
     For all of these conformal structures,  
      $(M,[g])$  comes equipped with an $S^1$-action generated by a conformal Killing field $\xi$; moreover, they all have $\overline{X}=M/S^1$,
    with  $\partial \overline{X}$ exactly given by the image of the zero locus of $\xi$. 

  We will now describe a generalization of  the above ansatz that constructs anti-self-dual $4$-manifolds that are {\sf not} locally conformally flat. 
  Let $(X,h)$ once again be the hyperbolic $3$-manifold associated with some quasi-Fuchsian group $\varGamma\subset PSL (2, \CC )$ of 
  Bers type;  and let us emphasize that we   take $\mathcal{H}^3$ to have a standard orientation, so that $X=\mathcal{H}^3/\varGamma$ also comes equipped with 
  a preferred  orientation from the outset.  For some positive integer $k$, now choose a configuration $\{ p_1 , p_2, \ldots , p_k\}$ of $k$ distinct points in $X$. Our objective
  will be to construct a compact anti-self-dual $4$-manifold $(M,[g])$ with a 
  semi-free\footnote{An  action is called {\sf semi-free} if it is free on the complement of its fixed-point set.}  conformally isometric $S^1$-action with  
  $k$ isolated fixed points $\{ \hat{p}_1, \hat{p}_2  \ldots , \hat{p}_k\}$   and two fixed surfaces $\Sigma_+$ and $\Sigma_-$, 
  such that 
  $M/S^{1}= \overline{X}$, with $\hat{p}_j$ mapping to $p_j$, and with the $\Sigma_+ \sqcup \Sigma_-$ mapping to $\partial \overline{X}$. 
   The construction  will actually require  the configuration $\{ p_1 , p_2, \ldots , p_k\}$  to satisfy a  mild constraint, but 
   configurations  with this property will  turn out to exist for all $k\geq 2$.   
  
  The  ingredients  needed for our  construction will include the Green's functions $G_{p_j}$ of the chosen points. By definition, 
  each $G_{p_j}: X-\{ p_j\}\to \RR^+$ is a positive harmonic function 
  that  tends to zero at $\partial \overline{X}$, and solves 
  \begin{equation}
\label{greens}
\Delta G_{p_j} = 2\pi \delta_{p_j}
\end{equation}
  in the distributional sense, where $\Delta = -\mbox{div grad}$ is the (modern  geometer's) Laplace-Beltrami operator  
 of   the hyperbolic metric $h$. 
  This Green's function can be constructed  explicitly by lifting the problem to 
  $\mathcal{H}^3$, where the inverse image of $p_j$ becomes the orbit  $\varGamma q_j$ of an arbitrarily chosen point  $q_j$ in the preimage. Superimposing 
  the hyperbolic Green's functions for the points in the orbit then leads one to express  the solution as a Poincar\'e  series 
  \begin{equation}
\label{serious}
G_{p_j}(q) = \sum_{\boldsymbol{\phi} \in \varGamma} \frac{1}{e^{2\dist (\boldsymbol{\phi} q_j,q)}-1},
\end{equation}
  where $\dist$ denotes the hyperbolic distance in $\mathcal{H}^3$. 
  The fact that this expression converges away from the orbit $\varGamma q_j$ 
 follows from  Sullivan's theorem on critical exponents \cite{sullivan} for Poincar\'e series, because the limit set $\Lambda (\varGamma )$ has Hausdorff dimension $< 2$. 
 It is then easy to show that the singular function defined by \eqref{serious} 
 solves \eqref{greens} in the distributional sense, and elliptic regularity therefore shows that 
 $G_{p_j}$ is smooth on $X-\{ p_j\}$. Moreover, Sullivan's theorem also implies that 
  $G_{p_j}$ extends continuously to the boundary of $\overline{X}$ by zero. 
 Regularity theory for  boundary-degenerate elliptic operators   \cite[Theorem 11.7]{graham} then implies that this  extension of $G_{p_j}$ 
 is actually smooth on  $\overline{X}-\{ p_j\}$, and has vanishing  normal derivative at $\partial \overline{X}$. 
 
 Let us next define a  harmonic function $V: X- \{ p_1, p_2, \ldots , p_k\} \to \RR^+$ by 
 \begin{equation}
\label{potential}
 V = 1+ G_{p_1} + G_{p_2} + \cdots + G_{p_k}.
\end{equation}
Since $V$ satisfies Laplace's equation on the complement of $\{ p_1, \ldots , p_k\}$, we have  $d\star dV=0$ in this region, 
and the $2$-form defined there by 
$$ F = \star dV$$
is therefore closed. Our construction now asks us to  find a principal circle bundle $P\to X- \{ p_1, p_2, \ldots , p_k\}$ equipped 
with a connection form $\theta$ whose curvature is exactly $F$. On any contractible region $U\subset  X- \{ p_1, p_2, \ldots , p_k\}$, this can always be done simply by taking
any $1$-form $\vartheta$ with $d\vartheta=F$, and then setting $\theta = dt+ \vartheta$ on $U \times S^1$. However, there is a 
cohomological obstruction to gluing these local models together consistently; namely, we need $[\frac{1}{2\pi} F]$ to be an integer class
in  deRham cohomology, because it will ultimately represent  the first Chern class
$c_1(P)\in H^2(X- \{ p_1, p_2, \ldots , p_k\}, \ZZ)$. This motivates the following definition:

\begin{defn} 
\label{intent}
If $X= \mathcal{H}^3/\varGamma$ is a quasi-Fuchsian hyperbolic $3$-manifold of Bers type, and if $\{ p_1, \ldots , p_k\}$ is a configurations  of  
$k\geq 0$ distinct points in $X$, we will say that $\{ p_1, \ldots , p_k\}$  is {\em quantizable} if  $\frac{1}{2\pi} \star dV $ represents an element of 
$H^2(X- \{ p_1, \ldots , p_k\}, \ZZ )\subset H^2 (X- \{ p_1, \ldots , p_k\}, \RR )$ in deRham cohomology. 
\end{defn}

 This ``quantization condition'' is equivalent to demanding that $\frac{1}{2\pi}\int_YF$ be  an integer for every  
smooth compact oriented surface $Y\subset  X- \{ p_1, p_2, \ldots , p_k\}$ without boundary. However,  $H_2(X- \{ p_1, p_2, \ldots , p_k\}, \ZZ)$
is in fact generated by $k$ small disjoint $2$-spheres $S_1, S_2, \ldots , S_k$ around the $k$ points of the configuration  $\{ p_1, p_2, \ldots , p_k\}$, together with 
a single copy of $\Sigma$ that is homologous to a boundary   component $\Sigma_+$ of $\partial \overline{X}$ in $\overline{X}-\{ p_1, \ldots , p_k\}$. We can therefore 
check our quantization condition by  just evaluating the  integral of $F=\star dV$ on these $k+1$ generators.

    In order to evaluate the corresponding integrals, it  will often be helpful  to pass  to the universal  cover $\mathcal{H}^3$ of $X$, where \eqref{serious} then tells us that 
       $$\star d G_{p_j} = - \frac{1}{2} \sum_{\boldsymbol{\phi} \in \varGamma}\boldsymbol{\phi}^* \alpha ; $$
    here $\alpha$ denotes the pull-back of the standard area form on the unit $2$-sphere $S^2$ in $T_{q_j}\mathcal{H}^3$ 
    via the radial geodesic projection 
    $(\mathcal{H}^3-\{ q_j\})\to S^2$, and  $\boldsymbol{\phi}^* \alpha$ is the pull-back of this singular form via
    the action of $\boldsymbol{\phi}\in \varGamma$ on $\mathcal{H}^3$. By representing the sphere $S_j$ by a small 
    $2$-sphere around $q_j$ that is contained in a fundamental domain for the action, we see that $\star dG_{p_j}$
    restricts to $S_j$ as $-\frac{1}{2} \alpha$ plus an exact form, and that $\star dG_{p_i}$ is exact on $S_j$ for $i\neq j$. 
   We thus  have
   \begin{equation}
\label{punctures}
\frac{1}{2\pi} \int_{S_j} F=\frac{1}{2\pi}\int_{S_j} \star dV = \frac{1}{2\pi}\int_{S^2}\left( -\frac{1}{2}\alpha \right)= - 1\in \ZZ
\end{equation}
    for every $j=1, \ldots , k$, and our quantization condition is therefore automatically  satisfied for the homology generators $[S_1]$, \ldots , $[S_k]$. 
    
    However, the integral of  $\star dG_{p_j}$ on a surface $\Sigma\subset X-\{ p_1, \ldots , p_k\}$ homologous to a boundary component
    $\Sigma_+$ of $\partial \overline{X}$ is a bit more complicated. The answer is best understood in terms of a special harmonic 
    function on $X$ that will  come to play a starring  role in this article:

    \begin{defn} 
    \label{tunfun}
    Let $\varGamma \cong \pi_1(\Sigma)$ be a Bers-type quasi-Fuchsian group,  let $X= \mathcal{H}^3/\varGamma$ be the associated
    hyperbolic $3$-manifold, and let $\overline{X}= [D^3-\Lambda(\varGamma)]/\varGamma$ be the associated $3$-manifold-with-boundary, 
    where $\partial X = [\CP_1- \Lambda (\varGamma)]/\varGamma$. 
    Let $\Sigma_+$ be the component of $\partial X$ on which the boundary orientation agrees with the 
    given orientation of $\Sigma$, 
    and let $\Sigma_-$ be the other component. Then 
    the {\em tunnel-vision function} of $X$ is  defined to be the unique  continuous function ${\zap f}: \overline{X}\to [0,1]$
    which is harmonic on $(X,h)$, equal to $1$ on $\Sigma_+$, and equal to $0$ on $\Sigma_-$.   \end{defn}

 The inspiration for this terminology   also motivates  the proofs of several of our results. Think of $X$ as a tunnel leading from
   $\Sigma_-$ to $\Sigma_+$, and imagine that the tunnel mouth $\Sigma_+$ leads into bright daylight, while $\Sigma_-$ leads into
   darkest night. How big does the bright tunnel opening appear from a point inside the tunnel?  For an observer at $p\in X$, this amounts to asking
   what fraction of the geodesic rays emanating from $p$ end up at $\Sigma_+$, where the measure used to determine this fraction 
   is the usual one on the unit $2$-sphere in $T_pX$. We can understand the answer by passing to the universal cover $\mathcal{H}^3$
   of $X$, and letting $q\in \mathcal{H}^3$ be a preimage of $p$. The sphere at infinity can then be decomposed as
   a disjoint union $\Lambda \sqcup \Omega_+ \sqcup \Omega_-$, where $\Lambda = \Lambda (\varGamma)$ has Lebesgue area
   zero, and where $\Omega_+$ and $\Omega_-$  are  the universal covers of $\Sigma_+$ and $\Sigma_-$, respectively. 
   The question is now equivalent to  asking for the fraction of geodesic rays emanating from $q$ that end up at $\Omega_+$.
   But this fraction is obviously just the  average value  of  the characteristic
   function of ${\Omega_+}$, computed with respect to the area measure  on the sphere at infinity induced by identifying it with the unit sphere in $T_q\mathcal{H}^3$
   via radial projection along geodesics. However, the Poisson integral formula \cite[Chapter 5]{davies} tells us that, as $q$ varies,  
   this spherical average defines a harmonic function $f:\mathcal{H}^3 \to \RR$ that tends to $1$ on $\Omega_+$ and 
   $0$ on $\Omega_-$. Since  $f$ is manifestly $\varGamma$-invariant, it must moreover be the  pull-back of a harmonic function on $X$, and since
   this harmonic function tends to $1$ at $\Sigma_+$ and to $0$ at $\Sigma_-$, it must  therefore coincide with the tunnel-vision function ${\zap f}$. This proves  that
  the apparent area  of the image of $\Sigma_+$, as seen from $p$, divided by the total area $4\pi^2$ of the unit $2$-sphere,  is exactly  ${\zap f}(p)={f}(q)$. 
  In other words, the  value at $p$ of our  tunnel-vision function ${\zap f}$   is equal to what some 
  analysts would call the {\sf harmonic measure $\omega (p, \Sigma_+, X)$
  of $\Sigma_+$ in the space $X$ in with respect to the reference point $p$}.

   This geometric description of the tunnel-vision function ${\zap f}$ has a flip-side that explains why we have chosen to introduce it at this particular juncture:

  \begin{lem} Let $p\in X$, and let $\Sigma\subset X$ be a surface which is homologous to $\Sigma_+$ in $\overline{X}-\{ p\}$. Then
  $$\int_\Sigma \star dG_p = -2\pi {\zap f}(p).$$
  \end{lem}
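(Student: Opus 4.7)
My plan is to prove this identity via Green's second identity applied to the harmonic functions ${\zap f}$ and $G_p$, combined with the homology invariance of $\int \star dG_p$. Since $G_p$ is harmonic on $X-\{p\}$, one has $d(\star dG_p) = -(\Delta G_p)\,dV = 0$ there, so $\star dG_p$ is closed and $\int_\Sigma \star dG_p$ depends only on $[\Sigma]\in H_2(X-\{p\})\cong H_2(\overline{X}-\{p\})$. For any sufficiently small $\epsilon>0$, a parallel copy $\Sigma_+^\epsilon$ of $\Sigma_+$ pushed just inside $X$ is homologous to $\Sigma$ in $\overline{X}-\{p\}$ (via concatenation of cobordisms: the intervening collar for $\Sigma_+^\epsilon \sim \Sigma_+$, and the given cobordism for $\Sigma_+ \sim \Sigma$), so it suffices to show $\int_{\Sigma_+^\epsilon}\star dG_p = -2\pi{\zap f}(p)$; moreover, this quantity is itself $\epsilon$-independent for small $\epsilon$, again by closedness of $\star dG_p$.

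The main calculation is Green's identity on the region $U_{\delta,\epsilon} = V_\epsilon\setminus\overline{B_p^\delta}$, where $B_p^\delta$ is a small hyperbolic ball around $p$ and $V_\epsilon\subset X$ is the complement of $\epsilon$-collars of $\Sigma_+$ and $\Sigma_-$ in $\overline{X}$. Both ${\zap f}$ and $G_p$ are smooth and harmonic on $U_{\delta,\epsilon}$, so the $2$-form $\omega := {\zap f}\,\star dG_p - G_p\,\star d{\zap f}$ is closed there, and Stokes' theorem gives
$$\int_{\Sigma_+^\epsilon}\omega + \int_{\Sigma_-^\epsilon}\omega = \int_{\partial B_p^\delta}\omega.$$
I then let $\delta\to 0$. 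Because $\Delta G_p = 2\pi\delta_p$, the divergence theorem applied to $B_p^\delta$ yields $\int_{\partial B_p^\delta}\star dG_p \to -2\pi$, while $\int_{\partial B_p^\delta}G_p \star d{\zap f}$ scales like $\delta^{-1}\!\cdot\!\delta^2\to 0$ (using $G_p\sim \tfrac{1}{2\,\dist(p,\cdot)}$ and boundedness of $\star d{\zap f}$ near $p$). Combined with continuity of ${\zap f}$ at $p$, this gives
$$\int_{\Sigma_+^\epsilon}\omega + \int_{\Sigma_-^\epsilon}\omega = -2\pi\,{\zap f}(p).$$

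Finally I let $\epsilon\to 0$. The boundary regularity theorem of Graham cited just above the lemma, together with the indicial-root computation for the hyperbolic Laplacian (its indicial roots at conformal infinity are $0$ and $2$), shows that, in a smooth boundary defining function $z$, we have $G_p = O(z^2)$ near all of $\partial\overline{X}$, $1-{\zap f} = O(z^2)$ near $\Sigma_+$, and ${\zap f} = O(z^2)$ near $\Sigma_-$. A brief local computation in the upper-half-space model of $\mathcal{H}^3$ then shows that $\int_{\Sigma_-^\epsilon}\omega = O(\epsilon^2)\to 0$, and that $\int_{\Sigma_+^\epsilon}\omega - \int_{\Sigma_+^\epsilon}\star dG_p = O(\epsilon^2)\to 0$. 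Passing to the limit gives $\int_{\Sigma_+^\epsilon}\star dG_p = -2\pi{\zap f}(p)$, which by the homology argument equals $\int_\Sigma\star dG_p$. The main obstacle is this final boundary analysis: the $O(z^2)$ expansions are essential, and their justification relies on the Graham regularity theorem already invoked in the paper in connection with the construction of $G_p$.
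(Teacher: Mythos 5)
Your proof is correct, but it takes a genuinely different route from the one in the paper. You establish the identity by Green's reciprocity: applying Green's second identity to the pair $({\zap f}, G_p)$ on the region between collars of $\partial\overline{X}$ and a small ball about $p$, so that the $-2\pi{\zap f}(p)$ emerges from the flux of $\star dG_p$ through $\partial B_p^\delta$ weighted by ${\zap f}(p)$, while the boundary terms at conformal infinity are controlled by the $O(z^2)$ vanishing of $G_p$ near all of $\partial\overline{X}$, of ${\zap f}$ near $\Sigma_-$, and of $1-{\zap f}$ near $\Sigma_+$. The paper instead reduces to $\int_{\Sigma_+}\star dG_p$ by noting that $\star dG_p$ extends smoothly to $\overline{X}-\{p\}$ as a closed form, and then computes that integral directly by lifting to $\mathcal{H}^3$: the Poincar\'e series \eqref{serious} is differentiated term by term, each term contributes $-\frac{1}{2}\boldsymbol{\phi}^*\alpha$ of the visual area form at a lift $q$ of $p$, and summing over a fundamental domain in $\Omega_+$ yields $-\frac{1}{2}\int_{\Omega_+}\alpha = -2\pi{\zap f}(p)$ via the Poisson-integral (harmonic-measure) description of ${\zap f}$. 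The trade-offs are worth noting. The paper's computation needs only the boundary regularity of $G_p$ (already cited from Graham) and the convergence of the Poincar\'e series guaranteed by Sullivan's critical-exponent theorem, and it directly exploits the geometric definition of ${\zap f}$ as harmonic measure. Your argument is classical potential theory (the duality between the Green's function and harmonic measure) and would work verbatim for any conformally compact setting, but it additionally requires the vanishing of the first normal derivative of ${\zap f}$ at $\partial\overline{X}$ in order to bound $\star d{\zap f}$ and the collar integrals; that fact is indeed available (it is established independently in Lemma \ref{expansion} by a barrier argument, and also follows from the same Graham regularity theorem applied to ${\zap f}$, whose boundary data are locally constant), so there is no circularity, but it is an extra analytic input the paper's proof does not need.
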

  \begin{proof}
  First notice that the $2$-form $\star dG_p$ is smooth up to the boundary of $\overline{X}$. Indeed, if we 
  use the upper-half-space
  model  to 
  represent the hyperbolic  metric as $h= (dx^2+dy^2+dz^2)/z^2$
  near 
  some boundary point of $\overline{X}$,  we then have 
   $\star dG_p= {z}^{-1} \starry d G_p$, where $\starry$ is the Hodge star with respect to the Euclidean 
  metric $dx^2+ dy^2 + dz^2$. The fact that $dG_p$ is smooth up to the boundary and vanishes there thus guarantees that $\star dG_p$
  extends smoothly to all of $\overline{X}-\{p\}$. 
  
  Since $\star dG_p$ is consequently a smooth  closed $2$-form on $\overline{X}-\{ p\}$, and because 
  $\Sigma$ is  homologous to $\Sigma_+$ by hypothesis, 
  Stokes' theorem now immediately tells us that  $\int_\Sigma  \star dG_p=\int_{\Sigma_+}  \star dG_p$. To compute the latter integral, we now remember that 
  the universal cover of $\Sigma_+$ is exactly $\Omega_+$. Letting $\dom\subset \Omega_+$
  be a fundamental domain for the action of $\varGamma$ on $\Omega_+$, our expression \eqref{serious} for the pull-back of
  $G_p$ to $\mathcal{H}^3$ therefore tells us that
    \begin{eqnarray*}
 \int_{\Sigma_+} \star dG_p &=& \int_\dom \star d \left( \sum_{\boldsymbol{\phi} \in \varGamma} \frac{1}{e^{2\dist (\boldsymbol{\phi} q_j,q)}-1}\right)
\\&=&
   \sum_{\boldsymbol{\phi} \in \varGamma}  \int_\dom \star d \left(\frac{1}{e^{2\dist (\boldsymbol{\phi} q_j,q)}-1}\right)
 \\&=&
  - \frac{1}{2} \sum_{\boldsymbol{\phi} \in \varGamma}\int_\dom \boldsymbol{\phi}^* \alpha
 \\&=&  - \frac{1}{2} \sum_{\boldsymbol{\phi} \in \varGamma}\int_{\boldsymbol{\phi}(\dom)} \alpha  \\&=&  - \frac{1}{2} \int_{\Omega_+} \alpha . 
\end{eqnarray*}
  where $q$ is a preimage of $p$, and $\alpha$ is the  pull-back of the area form on the unit sphere $S^2\subset T_q$ to $\overline{\mathcal{H}^3}-\{ q\}$
  via geodesic radial projection. However, we have just observed  that the Poisson integral formula tells us
   that $\frac{1}{4\pi}\int_{\Omega_+}\alpha$ is exactly the tunnel-vision function ${\zap f}$ evaluated at
  $p$. It thus follows that 
  $$ \int_\Sigma \star dG_p  = \int_{\Sigma_+} \star dG_p =  - \frac{1}{2} \int_{\Omega_+} \alpha  = - \frac{1}{2}[ 4\pi  {\zap f}(p) ] = - 2\pi {\zap f}(p), $$
 exactly  as claimed. 
    \end{proof}
    
   Adding up $k$ such contributions   now yields a useful corollary:
   
    \begin{lem} 
    \label{quantize}
    Let $\{ p_1, \ldots , p_k\}$ be any configuration of $k$ points in $X$, and let $V$ be the positive potential defined by \eqref{potential}.
    If $\Sigma \subset X- \{ p_1, \ldots , p_k\}$ is a surface homologous to the boundary component $\Sigma_+$ in $\overline{X} - \{ p_1, \ldots , p_k\}$,
    then 
     $$\frac{1}{2\pi} \int_\Sigma \star dV = - \sum_{j=1}^k {\zap f}(p_j).$$
  \end{lem}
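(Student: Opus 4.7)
The plan is to reduce this directly to the previous lemma by decomposing $V$ into its constituent pieces. First I would note that the constant function $1$ is annihilated by $d$, so $dV = \sum_{j=1}^k dG_{p_j}$ and hence $\star dV = \sum_{j=1}^k \star dG_{p_j}$ on $X - \{p_1,\ldots,p_k\}$; by linearity of integration,
$$\int_\Sigma \star dV = \sum_{j=1}^k \int_\Sigma \star dG_{p_j}.$$

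Next I would verify that the previous lemma applies to each term. The previous lemma needs $\Sigma$ to be homologous to $\Sigma_+$ in $\overline{X} - \{p_j\}$ for each $j$. But by hypothesis $\Sigma$ is homologous to $\Sigma_+$ in the smaller open set $\overline{X} - \{p_1,\ldots,p_k\}$, and any bounding $3$-chain witnessing this homology there also witnesses the homology in each larger open set $\overline{X} - \{p_j\}$. Thus the previous lemma gives $\int_\Sigma \star dG_{p_j} = -2\pi {\zap f}(p_j)$ for every $j$.

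Summing over $j=1,\ldots,k$ and dividing by $2\pi$ yields the claimed identity. There is no real obstacle here — the lemma is a direct corollary of the previous one and the linearity of the Hodge star, and the only thing to check carefully is that the homological hypothesis is preserved when passing from $\overline{X} - \{p_1,\ldots,p_k\}$ to each $\overline{X} - \{p_j\}$, which is immediate.
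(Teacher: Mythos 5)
Your proof is correct and is exactly the argument the paper intends: it introduces the lemma with the phrase ``adding up $k$ such contributions,'' i.e.\ decompose $V$ as $1+\sum_j G_{p_j}$, apply the preceding lemma to each $G_{p_j}$, and sum. Your extra remark that the homology hypothesis in $\overline{X}-\{p_1,\ldots,p_k\}$ implies the one needed in each $\overline{X}-\{p_j\}$ is a correct and welcome bit of care that the paper leaves implicit.
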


   Since ${\zap f}: X\to (0,1)$, 
  it  follows that our quantization condition can never be satisfied if $k=1$. Fortunately, however, this problem  does not reoccur for
   larger values of $k$:
  
  \begin{prop} 
  \label{unobstructed} 
 Let $(X,h)$ be any quasi-Fuchsian hyperbolic $3$-manifold of Bers type. Then for every integer $k\geq 2$, 
    there are quantizable configurations   $\{ p_1 , \ldots , p_k\}$ of $k$ distinct points in  $X$.   
    \end{prop}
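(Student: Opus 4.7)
The plan is to reduce the integrality condition to a single scalar equation involving the tunnel-vision function, and then satisfy it by the intermediate value theorem. The homology group $H_2(X-\{p_1,\ldots,p_k\},\ZZ)$ is generated by the $k$ small spheres $S_j$ around the punctures, together with one surface $\Sigma$ homologous to $\Sigma_+$ in $\overline{X}-\{p_1,\ldots,p_k\}$. Formula \eqref{punctures} already shows that $\frac{1}{2\pi}\int_{S_j}\star dV = -1\in \ZZ$ automatically for each $j$, and Lemma \ref{quantize} evaluates the remaining generator as $\frac{1}{2\pi}\int_\Sigma \star dV = -\sum_{j=1}^k {\zap f}(p_j)$. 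Hence the quantization condition on $\{p_1,\ldots,p_k\}$ is equivalent to the single real condition
$$ \sum_{j=1}^k {\zap f}(p_j)\in \ZZ. $$

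Next I would exploit the flexibility of the tunnel-vision function. By Definition \ref{tunfun}, ${\zap f}$ is continuous on $\overline{X}$, equals $1$ on $\Sigma_+$ and $0$ on $\Sigma_-$; and since it is harmonic and non-constant on $X$, the strong maximum principle forces ${\zap f}(X)\subset (0,1)$. By continuity up to the boundary, ${\zap f}$ attains values arbitrarily close to $0$ at interior points near $\Sigma_-$, and arbitrarily close to $1$ at interior points near $\Sigma_+$.

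For $k\geq 2$, I would then consider the configuration space $\mathrm{Conf}_k(X):=\{(p_1,\ldots,p_k)\in X^k : p_i\neq p_j \text{ for } i\neq j\}$. Because $X$ is a connected $3$-manifold, the pairwise diagonals have real codimension $3$, so $\mathrm{Conf}_k(X)$ is open and path-connected in $X^k$. The continuous map
$$ S:\mathrm{Conf}_k(X)\to \RR, \qquad S(p_1,\ldots,p_k):=\sum_{j=1}^k {\zap f}(p_j), $$
therefore has connected image. Clustering the $k$ distinct points near $\Sigma_-$ makes $S$ arbitrarily small, while clustering them near $\Sigma_+$ makes $S$ arbitrarily close to $k$, so the image contains the interval $(0,k)$ and in particular every integer $m\in\{1,2,\ldots,k-1\}$. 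Any preimage of such an integer is a quantizable configuration of $k$ distinct points in $X$.

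I expect no serious obstacle: the only point requiring brief care is the connectedness of $\mathrm{Conf}_k(X)$, so that the intermediate value theorem genuinely applies, but this is standard once the diagonals have codimension $\geq 2$. The hypothesis $k\geq 2$ is essential precisely because $(0,k)$ must contain an integer; the case $k=1$, where $(0,1)\cap \ZZ=\emptyset$, is genuinely obstructed, in agreement with the text's earlier observation.
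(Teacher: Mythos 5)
Your proposal is correct and follows essentially the same route as the paper: the same reduction, via \eqref{punctures} and Lemma \ref{quantize}, to the single condition $\sum_{j}{\zap f}(p_j)\in\ZZ$, followed by an intermediate-value argument using the fact that ${\zap f}$ is continuous on $\overline{X}$ with boundary values $0$ and $1$. The only (harmless) difference is in the last step: you get distinctness and surjectivity onto $(0,k)$ from the connectedness of the configuration space, whereas the paper simply places all $k$ points on the level set ${\zap f}=\ell/k$, using the mean-value property to see that each level set is uncountable.
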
 
  
  \begin{proof} According to  Definition \ref{intent}, the claim just means that  there are  configurations $\{ p_1, \ldots , p_k\}$ of  distinct points in $X= \mathcal{H}^3/\varGamma$
  for which $\frac{1}{2\pi} \star dV $ represents an element of $H^2(X- \{ p_1, \ldots , p_k\}, \ZZ )\subset H^2 (X- \{ p_1, \ldots , p_k\}, \RR )$ in deRham cohomology. 
  Since $H_2 (X-\{p_1, \ldots  , p_k\})$ is generated by $\Sigma$ and  small  $2$-spheres $S_1, \ldots , S_k$ about the 
  points   $p_1, \ldots , p_k$ of the configuration,
  we only need to arrange for the integrals of $\star dV$ on these generating surfaces  to all be integers. 
  However,  since \eqref{punctures} shows that the integrals on $S_1, \ldots , S_k$ all equal $-1$, we only need to worry about the integral on $\Sigma$,
  which equals $- \sum_{j=1}^k {\zap f}(p_j)$ by Lemma \ref{quantize}. But because  
  ${\zap f}: \overline{X}\to [0,1]$ is continuous and achieves the values $0$ and $1$ exactly on its two boundary components, 
  and because  $\overline{X}$ is connected,    every element of the   interval $(0,1)$ must occur as the value of ${\zap f}$ at some point of $X$. Moreover, since the restriction of
   ${\zap f}$ to $(X,h)$ is harmonic, every such value is attained by uncountably many different points in $X$; indeed,  the mean-value theorem 
   guarantees that ${\zap f}(p)$ also occurs as a value of ${\zap f}$ restricted to  the sphere of radius $\varrho$ about $p$, for every $\varrho$ smaller than the injectivity radius of
   $(X,h)$ at $p$. 
  If $\ell$ is any integer 
   from $1$ to $k-1$,  we can therefore   pick distinct points $p_1 , \ldots , p_k\in X$ with ${\zap f}(p_1) = \cdots = {\zap f}(p_k)= \ell/k$, which  then ensures that 
    $\frac{1}{2\pi}\int_\Sigma \star dV= -\ell$. Of course, the same reasoning also shows that this same goal can also be 
    attained by  specifying the ${\zap f}(p_j)$ to be any other $k$ elements in $(0,1)$  that add up to  $\ell$.
  \end{proof}
 
 In fact, the space  of quantizable configurations is  a real-analytic subvariety of the non-singular part of the $k$-fold symmetric product $X^{[k]}$, and is
 locally cut out by the vanishing of a single harmonic function. 
 However, this space is disconnected if $k\geq 3$, since $\ell=\sum_{j=1}^k {\zap f}(p_j)$   must be an integer for
 every quantizable configuration, and every integer  from $1$ to $k-1$ arises in this manner.

\begin{figure}[htb]
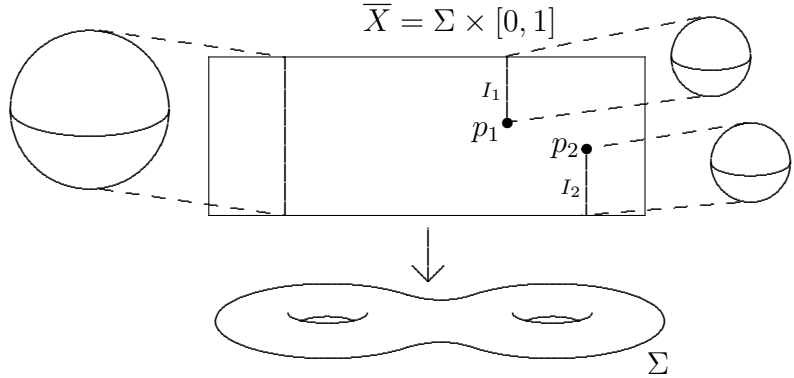

\centerline{
\beginpicture
\setplotarea x from 0 to 320, y from -5 to 140
\putrectangle corners at 75 130 and 240 70
\arrow <6pt> [1,2] from 158  65  to 158 45
\put {\circle*{4}} [B1] at 190 105
\put {\circle*{4}} [B1] at 220 95
\put {$p_1$} [B1] at 180 100
\put {${\scriptstyle I_1}$} [B1] at 182 115
\put {$p_2$} [B1] at 210 94
\put {${\scriptstyle I_2}$} [B1] at 212 78
\put {$\Sigma$} [B1] at 245 10 
\put {$\overline{X}= \Sigma\times [0,1]$} [B1] at 170 140
\circulararc 360 degrees from 60 110 center at 30 110
\ellipticalarc axes ratio 3:1 -180 degrees from 60 110
center at 30 110
\circulararc 360 degrees from 265 115 center at 265 130
\ellipticalarc axes ratio 3:1 -180 degrees from 280 130
center at 265 130
\circulararc 360 degrees from 280 75 center at 280 90
\ellipticalarc axes ratio 3:1 -180 degrees from 295 90
center at 280 90
\ellipticalarc axes ratio 3:1  270 degrees from 150 40
center at 120 30
\ellipticalarc axes ratio 3:1  -270 degrees from 175 40
center at 205 30
\ellipticalarc axes ratio 4:1 -180 degrees from 135 33
center at 120 33
\ellipticalarc axes ratio 4:1 145 degrees from 130 30
center at 120 29
\ellipticalarc axes ratio 4:1 180 degrees from 190 33
center at 205 33
\ellipticalarc axes ratio 4:1 -145 degrees from 195 30
center at 205 29
{\setquadratic 
\plot 150 40    163 38    175 40   /
\plot 150 20    163 22    175 20  /
}
{\setlinear
\plot  184 105 184 130 /
\plot  214 95 214 70 /
\plot  100 130 100 70 /
\setdashes 
\plot  30 140 100 130   /
\plot  30 80  100 70  /
\plot  257 145  182 130   /
\plot  257 115  182 105    /
\plot  272 75  212 70   /
\plot  275 105  212 95   /
}
\endpicture
}
\caption{ \label{ansatz}
One can construct an anti-self-dual $M^4$ from  any 
quasi-Fuchsian  $X^3\approx \Sigma \times (0,1)$ and any  quantizable configuration $\{ p_1, \ldots , p_k\}$
of $k$ points in $X$. The resulting $M$ comes equipped with
an isometric $S^1$-action  such that $M/S^1=\overline{X}\approx \Sigma \times [0,1]$.
This $S^1$-action  action  has fixed points that project to $\{ p_1, \ldots , p_k\}\cup \partial\overline{X}$, but is  free everywhere else. 
If $I_j$ is a segment in $\overline{X}$ that joins the configuration point $p_j$ to  $\partial\overline{X}$ while avoiding  other points
of the configuration,  the inverse image of $I_j$ in $M$ is then a $2$-sphere of self-intersection $-1$. If we choose a disjoint collection $I_1, \ldots, I_k$ of 
$k$  such segments and then collapse the corresponding $2$-spheres in $M$, 
we obtain an $S^2$-bundle over $\Sigma$.  
This last assertion, which implies that $M\approx (\Sigma \times S^2) \# k \overline{\CP}_2$,  is best seen by  first observing 
 that  the pre-image in $M$ of any  segment that joins  the two components of  $\partial\overline{X}$, while avoiding
the configuration,  is a $2$-sphere with trivial normal bundle. 
}
\end{figure}

 With  Proposition \ref{unobstructed} in hand, we now proceed to construct anti-self-dual metrics on $(\Sigma \times S^2)\# k \overline{\CP}_2$  associated with each  
  quasi-Fuchsian hyperbolic manifold $(X,h)$ and each quantizable configuration of $k$ distinct points $p_1, \ldots p_k$ in $X$, in a manner illustrated by
  Figure \ref{ansatz}.  Indeed, given
  a quantizable configuration, let $V$ be given by \eqref{potential}, set $F=\star dV$, and 
  let $P\to X-\{ p_1 , \ldots , p_k\}$ be a principal circle bundle with first Chern class $c_1(P) = [\frac{1}{2\pi}F]\in H^2 (X-\{ p_1 , \ldots , p_k\}, \ZZ )$. 
  Let $\theta$ be a connection $1$-form on $P$ with curvature $d\theta = F$, let ${\zap u} : \overline{X}\to [0,\infty )$ be a non-degenerate
  defining function for $\partial \overline{X}$, and then equip $P$ with the Riemannian metric 
  $$g = {\zap u}^2 \left(Vh+ V^{-1} \theta^2\right).$$
  Because $h$ is hyperbolic, $V$ is harmonic, and  $d\theta = \star dV$, this ``hyperbolic ansatz''
  metric is automatically anti-self-dual \cite{mcp2} with respect to a natural orientation of $P$. 
  Moreover, its metric-space completion is actually a smooth anti-self-dual $4$-manifold $(M,g)$, obtained by adding one extra point $\hat{p}_j$
  for each point $p_j$ of the configuration, and a pair of surfaces $\Sigma_\pm$ conformal to the two components of $\overline{X}$; 
 this can be proved \cite{jongsu,mcp2,leblown,lebsdhg} by explicitly constructing the completion, using local models near $\Sigma_\pm$ and the $\hat{p}_j$. 
 The resulting smooth compact anti-self-dual  $4$-manifolds $(M,g)$ are then all diffeomorphic to $(\Sigma\times S^2) \# k \CP_2$, and carry a conformally
 isometric semi-free $S^1$-action that is generated by a conformal Killing field $\xi$ of period $2\pi$. The invariant $\ell\in \{ 1,\ldots , k-1\}$
 of the configuration now becomes an invariant of the $S^1$-action, because the fixed surfaces $\Sigma_+$ and $\Sigma_-$ of the action now have
 self-intersection numbers $-\ell$ and  $-k+\ell$, respectively.

 However, it is also worth noting that $(M,[g])$ is not uniquely determined by $(X, \{ p_1, \ldots , p_k\})$, because 
 the principal-bundle-with-connection $(P,\theta )$ is not determined up to gauge-equivalence by its curvature $F$. Indeed, if
 $\Sigma$ has genus ${\zap g}\geq 2$, there is a $2{\zap g}$-dimensional torus $H^1(\Sigma , \RR) /H^1(\Sigma , \ZZ)$ of
 flat $S^1$-connections on $X\approx \Sigma \times \RR$, and this torus then acts freely on $S^1$-connections
 over $X- \{ p_1, \ldots , p_k\}$ without changing their curvatures. This  additional freedom in the construction 
 supplements our freedom to choose  $(X,h, \{ p_1, \ldots , p_k\})$, and  generalizes the extra  choice of a 
  flat $S^1$-connection we previously encountered  in  the locally-conformally-flat case. 
 
 \bigskip 
  
 When $\varGamma$ is a Fuchsian group, the anti-self-dual conformal class $[g]$ on $M$ can actually be 
 represented \cite{leblown} by a scalar-flat K\"ahler metric, obtained by using the specific 
 defining function ${\zap u} = \sqrt{f(1-f)}$ for $\partial \overline{X}$ as our conformal factor. However, this does not happen for other quasi-Fuchsian groups: 
  
  \begin{prop}
  Let $(M,[g])$ be an anti-self-dual $4$-manifold arising from a quasi-Fuchsian group $\varGamma$ of Bers type and a (possibly empty) 
  quantizable configuration of  points in $X=\mathcal{H}^3/\varGamma$ via the hyperbolic-ansatz construction. Then $[g]$ is represented by a global scalar-flat 
  K\"ahler metric $g\in [g]$ if and only if $\varGamma$ is a Fuchsian group.
  \end{prop}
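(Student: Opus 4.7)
The ``if'' direction is the content of \cite{leblown}, where the choice of conformal factor ${\zap u}=\sqrt{{\zap f}(1-{\zap f})}$, with ${\zap f}$ the tunnel-vision function, is shown to promote the anti-self-dual metric to a scalar-flat K\"ahler representative whenever $\varGamma$ is Fuchsian. My plan for the ``only if'' direction has three stages: reduction to an $S^1$-invariant ansatz, identification of the moment map with ${\zap f}$, and finally a rigidity step that forces $\varGamma$ to be Fuchsian.

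For the first stage, I would argue that any scalar-flat K\"ahler representative $\tilde g\in[g]$ is $S^1$-invariant. Since $\tilde g$ is in particular almost-K\"ahler, its K\"ahler form is a non-vanishing self-dual harmonic $2$-form of constant length $\sqrt{2}$; such a form is uniquely determined up to sign within $[g]$, so the connected group of conformal isometries of $(M,[g])$ must preserve both $\omega$ and $\tilde g$. Consequently, $\tilde g$ may be written in the hyperbolic-ansatz form $\tilde g = {\zap u}^2(Vh+V^{-1}\theta^2)$ for a smooth positive defining function ${\zap u}$ on $\overline{X}$.

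For the second stage, I would identify $\omega$. Every $S^1$-invariant self-dual $2$-form on $(P,Vh+V^{-1}\theta^2)$ has the form $\omega = \theta\wedge d\mu + V\star_h d\mu$ for some function $\mu$ on $X^0$. Using $d\theta=\star_h dV$ and the harmonicity of $V$, the cross terms in $d\omega$ cancel and the closedness condition $d\omega=0$ reduces to $\Delta_h\mu=0$. Smooth extension of $\omega$ across the fixed surfaces $\Sigma_\pm$ (where ${\zap u}$ vanishes to first order and $\xi$ has zero locus) then forces $\mu$ to be locally constant on $\partial\overline{X}$, and, in the presence of configuration points, to have prescribed logarithmic behavior at each $\hat p_j$; by the maximum principle and the characterization of ${\zap f}$ via harmonic measure, $\mu$ must be an affine combination of $1$ and ${\zap f}$. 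After normalization we may take $\mu = {\zap f}$, and the constant-length condition $|\omega|_{\tilde g}^2 = 2$ yields the pointwise identity ${\zap u}^2 = c\, |d{\zap f}|_h$ for a positive constant $c$.

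The third stage is the main obstacle: to conclude that $\varGamma$ is Fuchsian. At minimum, ${\zap u}=\sqrt{c\,|d{\zap f}|_h}$ must be smooth and positive throughout $X$, so ${\zap f}$ must have no critical points in $X$. More is needed, however: the almost-complex structure $J$ determined pointwise by $(\tilde g,\omega)$ must be integrable, and a direct computation in the hyperbolic ansatz shows that the vanishing of the Nijenhuis tensor $N_J$ translates into the pointwise statement that $|d{\zap f}|_h$ is a function of the value of ${\zap f}$ alone. Equivalently, the level sets of ${\zap f}$ form a codimension-one foliation of $X$ by surfaces of constant mean curvature, equidistant to a common totally geodesic leaf $\Sigma_0\subset X$. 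The concluding rigidity observation --- that a hyperbolic $3$-manifold of Bers type contains a compact totally geodesic surface if and only if the associated Kleinian group $\varGamma$ is conjugate into $PSL(2,\RR)$, i.e., Fuchsian --- then completes the argument. The technical heart of the proof, and the step I expect to be the most delicate, is the computation identifying integrability of $J$ with the factorization $|d{\zap f}|_h=\varphi({\zap f})$; everything else is elliptic boundary analysis and a standard rigidity fact about totally geodesic surfaces in hyperbolic $3$-manifolds.
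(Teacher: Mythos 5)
Your Stages 1 and 2 are sound and essentially reproduce the analysis carried out in \S 4 of the paper: since $b_+=1$, the self-dual harmonic form is $S^1$-invariant, it corresponds to the strongly harmonic $1$-form $d{\zap f}$, and the constant-length normalization forces the conformal factor of any almost-K\"ahler representative to satisfy ${\zap u}^2=c\,|d{\zap f}|_h$. The trouble is that the entire weight of the ``only if'' direction then falls on Stage 3, and there the proposal has two genuine gaps. First, the asserted equivalence between integrability of $J$ and the factorization $|d{\zap f}|_h=\varphi({\zap f})$ is precisely the computation you have not done; as written it is a conjecture, not a proof, and nothing in the proposal indicates how the Nijenhuis calculation would go. Second, even granting that equivalence, the step from ``the level sets of ${\zap f}$ form an equidistant CMC foliation'' to ``some leaf is totally geodesic'' is not the standard rigidity fact you invoke: genuinely non-Fuchsian quasi-Fuchsian manifolds (for instance Uhlenbeck's nearly Fuchsian ones) are also foliated by equidistant compact surfaces, so equidistance alone proves nothing. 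To rescue this step one would have to exploit the Riccati equation $A'=\mathrm{Id}-A^2$ together with the constancy of $H$ on each leaf to deduce that every $\operatorname{tr}(A^k)$ is leafwise constant, hence that each leaf has constant principal curvatures, and then appeal to the classification of isoparametric surfaces in $\mathcal{H}^3$ to see that a compact such leaf of genus $\geq 2$ must be an equidistant of a totally geodesic surface. That is substantially more machinery than your sketch acknowledges, and it still sits downstream of the unproven integrability criterion.

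The paper's actual proof of the ``only if'' direction bypasses all of this with a sign argument. If $[g]$ contained a scalar-flat (K\"ahler) representative, the conformal Laplacian would have vanishing first eigenvalue, so the Yamabe constant would satisfy $Y_{[g]}=0$. But Jongsu Kim's theorem (generalizing Schoen--Yau) shows that $Y_{[g]}<0$ for any anti-self-dual manifold arising from the hyperbolic ansatz whenever the limit set of the Kleinian group has Hausdorff dimension $>1$, and Bowen's theorem guarantees $\dim_H\Lambda(\varGamma)>1$ for every quasi-Fuchsian group of Bers type that is not Fuchsian. No almost-complex or foliation geometry is required. If you do wish to pursue a structural argument in your spirit, a cleaner route than the Nijenhuis tensor is the Weitzenb\"ock formula: for a harmonic self-dual $2$-form of constant length $\sqrt2$ on an anti-self-dual $4$-manifold one has $|\nabla\omega|^2=-\tfrac{2}{3}s$, so the almost-K\"ahler representative is K\"ahler if and only if it is scalar-flat --- which again reduces the whole question to the sign of the scalar curvature, i.e.\ to exactly the Yamabe-constant considerations the paper uses.
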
 
  \begin{proof}  When $\varGamma$  is Fuchsian, it was shown in  \cite{leblown} that the hyperbolic-ansatz construction and an appropriate choice of conformal factor
  yield a scalar-flat K\"ahler metric. 
  In particular, because $[g]$ is represented by a global scalar-flat metric in the Fuchsian case, the constructed conformal class has
   Yamabe constant $Y_{[g]}=0$. By contrast, however, one can show that 
   $Y_{[g]}< 0$ if $\varGamma$ is quasi-Fuchsian but not Fuchsian. Indeed, Jongsu Kim \cite{jongsu}, generalizing a result of Schoen and Yau \cite{schyaudim},
  showed that the Yamabe constant of an anti-self-dual  manifold $(M^4,[g])$ arising from the hyperbolic ansatz is negative whenever the limit set $\Lambda (\varGamma)$ of the 
  corresponding Kleinian group $\varGamma$ has Hausdorff dimension $> 1$. The claim  therefore follows from a result of Bowen \cite{bowen} 
   that  $\dim_H \Lambda (\varGamma ) >  1$ for any a quasi-Fuchsian group of Bers type that  is not 
  Fuchsian; cf. 
  \cite{bishbow,sullivan}.
  \end{proof}

\section{Harmonic Forms and Harmonic Functions}
\label{harmandammer} 

Given a quasi-Fuchsian hyperbolic $3$-manifold $X\approx \Sigma \times \RR$ of Bers type, we have now seen how to  construct 
 anti-self-dual conformal classes $[g]$ on $(\Sigma \times S^2)\# k \overline{\CP}_2$, $k\neq 1$,  from quantizable configurations  of $k$ points in $X$. Because
these $4$-manifolds $M$ all have $b_+=1$, each such $(M,[g])$ carries exactly a $1$-dimensional space
of self-dual harmonic $2$-forms; that is, there is a non-trivial self-dual harmonic $2$-form $\omega$ on any
such $(M,g)$, and this form is   unique up to multiplication by a non-zero real constant. Our next goal
 is to translate the question of whether   $\omega\neq 0$ everywhere into a
question about the quasi-Fuchsian hyperbolic manifold $(X,h)$.

We begin with a local study of the problem. Recall that an open dense set $P$ of $M$ was constructed
as a circle bundle $P\to X- \{ p_1, \ldots ,  p_k\}$, equipped with a connection $1$-form $\theta$ whose
curvature $F=d\theta$ is given by $\star dV$, where $V$ is the positive harmonic function on $(X- \{ p_1, \ldots ,  p_k\},h)$
given by \eqref{potential}. We then equipped $P$ with a conformal class that is represented on 
$P$ by 
\begin{equation}
\label{startup}
g_0= Vh + V^{-1}\theta^{\otimes 2}
\end{equation}
although we have until now generally tended to focus  on  conformally rescalings $g={\zap u}^2g_0$
that were chosen so as to extend to the compact manifold $M$. We emphasize that $(P,g_0)$ carries an
isometric $S^1$-action that  is generated by a Killing field $\xi$ that satisfies $\theta (\xi )\equiv 1$.

Now suppose that $\omega$ is a self-dual $2$-form on $P$ which is invariant under the fixed
isometric $S^1$-action on $P$. Here, we fix our orientation conventions so that, if $e^1, e^2, e^3$
is an oriented orthonormal co-frame on $(X, h)$, then $V^{1/2}e^1, V^{1/2} e^2, V^{1/2} e^3, V^{-1/2}\theta$ is an oriented orthonormal co-frame with respect to $g_0$. It follows that 
$$e^1 \wedge \theta + V e^2 \wedge e^3$$
is a self-dual $2$-form on $(P,g_0)$ of point-wise norm $\sqrt{2}$, and, since $SO(3) \subset SO(4)$
acts transitively on the unit sphere in $\Lambda^+$, this implies that {\em any} $\xi$-invariant 
self-dual $2$-form field of
norm $\sqrt{2}$ can locally be expressed in this way by choosing an appropriate oriented orthonormal frame
on $X$. It therefore follows that any $\xi$-invariant self-dual $2$-form on $P$ can be uniquely written 
as 
\begin{equation}
\label{duck}
\omega = \psi  \wedge \theta + V\star \psi
\end{equation}
for a unique $1$-form $\psi$ on $X-\{ p_1, \ldots , p_k\}$, where $\star$ is the Hodge star of the oriented $3$-manifold $(X,h)$. 

 Let us now also suppose that the self-dual $2$-form $\omega$ is also {\em closed}; of course, since $\omega = \ast \omega$,  where 
$\ast$ denotes the $4$-dimensional Hodge star, this then implies that $\omega$ is co-closed,  and hence harmonic. 
Since $\omega$ is invariant under the flow of $\xi$,  Cartan's magic  formula for the Lie derivative of a differential form therefore tells us that 
$$0 =\mathcal{L}_\xi \omega = \xi \lrcorner ~d\omega  + d (\xi \lrcorner~ \omega) = - d\psi$$
so that $\psi$ must be a closed $1$-form on $X-\{ p_1, \ldots , p_k\}$.  However, if we  let $\mu_h$ denote the volume $3$-form of $(X,h)$,  
we also have 
\begin{eqnarray*}
0&=& d\omega\\ &=& -\psi \wedge d \theta + d(V\star \psi) 
\\ &=& -\psi \wedge \star dV + dV\wedge \star \psi + V (d\star \psi )
\\&=&  -\langle \psi , dV \rangle \mu_h + \langle dV , \psi \rangle \mu_h - V (d\star \psi )
\\ & =& -V (d\star \psi )
\end{eqnarray*}
and we therefore conclude that the $1$-form $\psi$ is {\em strongly harmonic}, in the sense that 
$$ d\psi =0, \qquad d \star \psi = 0.$$
Conversely, if $\psi$ is any strongly harmonic $1$-form  on $X-\{ p_1, \ldots , p_k\}$, the 
 $2$-form $\omega$ defined on $(P,g)$  by \eqref{duck} is closed and self-dual, and hence harmonic. 
 We note in passing that this argument does not  depend on the fact that 
 $h$ has constant curvature or that $(P,g_0)$ is anti-self-dual; one just needs $g_0$ to be expressed as \eqref{startup} for some metric $h$, some 
  positive harmonic function 
 $V$,  and some connection $1$-form $\theta$ on $P$ with curvature $F=\star dV$. 
 
 Notice that the relationship between $\omega$ and $\psi$ codified by \eqref{duck}  entails a simple
 relationship between the point-wise norms of these forms. Indeed, notice that 
$$
\omega \wedge \omega = 2  \psi \wedge  \theta \wedge V \star \psi = 2 V (\psi \wedge \star \psi ) \wedge \theta 
= 2 V |\psi |^2_h ~\mu_h \wedge \theta~.
$$
On the other hand, since $g_0$ has volume form 
$$
\mu_{g_0} = V^{1/2}e^1\wedge  V^{1/2} e^2 \wedge  V^{1/2} e^3 \wedge  V^{-1/2}\theta = V \mu_h \wedge \theta
$$
the self-duality of $\omega$ thus implies that 
$$
|\omega |_{g_0}^2 \mu_{g_0} = \omega \wedge\omega = 2 | \psi|^2_h \mu_{g_0}
$$
and hence that 
\begin{equation}
\label{norman}
|\omega |_g = \sqrt{2} |\psi|_h .
\end{equation}
This will allow us to invoke the following removable singularities result:

\begin{lem}
\label{removable} 
Let $p$ be a point of a  smooth, oriented 
  Riemannian $n$-manifold $(Y, \mathbf{g})$, $n \geq 2$, and let $\varphi$ be a differential $\ell$-form 
on $Y-\{ p\}$ that is {\sf strongly harmonic}, in the sense that $d\varphi=0$ and $d\star \varphi =0$. 
 Also suppose that $\varphi$ is bounded near $p$,  in the sense that there 
is a neighborhood $U$ of $p$ and a positive constant $C$ such that the point-wise norm $|\varphi|$ satisfies
$|\varphi | < C$ on $U-\{ p\}$. Then $\varphi$  extends  uniquely to $Y$ as a smooth strongly harmonic 
$\ell$-form. 
\end{lem}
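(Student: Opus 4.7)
The plan is to reduce the statement to a standard elliptic regularity/Weyl's-lemma argument by showing that $\varphi$, regarded as an $L^\infty_{\mathrm{loc}}$ section of $\Lambda^\ell T^*Y$, satisfies $d\varphi = 0$ and $d\star\varphi = 0$ \emph{distributionally} across $p$. First I would observe that, since $d^* = \pm \star d \star$ on $\ell$-forms, a strongly harmonic form is automatically annihilated by the Hodge Laplacian $\Delta = dd^* + d^* d$, an elliptic second-order operator with smooth coefficients. Uniqueness of the extension is immediate from continuity, since $Y - \{p\}$ is dense. So the whole problem is local around $p$, and the content is to upgrade the pointwise equations on $Y-\{p\}$ to distributional equations on $Y$.

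Working in a coordinate ball around $p$, I would choose a family of smooth cutoff functions $\chi_\epsilon : Y \to [0,1]$ that vanish on the geodesic ball $B_{\epsilon/2}(p)$, equal $1$ outside $B_\epsilon(p)$, and satisfy $|d\chi_\epsilon|_{\mathbf{g}} \le C/\epsilon$ uniformly. For any compactly supported smooth test form $\eta$ of complementary degree, the form $\chi_\epsilon \eta$ has support in $Y - \{p\}$, where $\varphi$ is smooth and $d\varphi = 0$, so Stokes gives
\[
\int_Y \varphi \wedge d\chi_\epsilon \wedge \eta \;+\; \int_Y \varphi \wedge \chi_\epsilon\, d\eta \;=\; 0 .
\]
The second integral converges to $\int_Y \varphi \wedge d\eta$ by bounded convergence. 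The first is controlled by $\|\varphi\|_{L^\infty} \cdot \|\eta\|_{L^\infty} \cdot (C/\epsilon) \cdot \mathrm{vol}(B_\epsilon \setminus B_{\epsilon/2})$, which is $O(\epsilon^{n-1})$ and therefore vanishes in the limit precisely because $n \geq 2$. Hence $d\varphi = 0$ as a distribution on all of $Y$, and the identical argument applied to $\star\varphi$ gives $d\star\varphi = 0$ distributionally on $Y$.

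Combining these two weak equations yields $\Delta\varphi = 0$ in the sense of distributions on $Y$. Since $\Delta$ is elliptic with smooth coefficients and $\varphi$ is a locally bounded (hence $L^1_{\mathrm{loc}}$) section, interior elliptic regularity for $\Delta$ promotes $\varphi$ to a smooth section on all of $Y$ satisfying $\Delta\varphi = 0$ in the classical sense. The weak identities $d\varphi = 0$ and $d\star\varphi = 0$ then hold pointwise by smoothness, so the extension is strongly harmonic, and uniqueness is clear from density of $Y - \{p\}$.

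The main obstacle is really just the cutoff estimate, which is why the hypothesis $n \geq 2$ enters: one needs the factor $\mathrm{vol}(B_\epsilon \setminus B_{\epsilon/2}) / \epsilon$ to tend to $0$, and this is the only place in the argument where the precise pointwise bound on $|\varphi|$ (as opposed to some integrability condition) is used. Everything else is entirely formal once one notes that strong harmonicity is just Hodge harmonicity in disguise.
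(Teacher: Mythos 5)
Your proposal is correct and follows essentially the same route as the paper: both arguments use the $L^\infty$ bound to show that the boundary/cutoff contribution near $p$ is $O(\epsilon^{n-1})$, conclude that $d\varphi=0$ and $d\star\varphi=0$ hold distributionally across $p$, and then invoke elliptic regularity for the Hodge Laplacian. The only (immaterial) difference is that you absorb the boundary term into a cutoff function supported on an annulus, whereas the paper integrates directly over the sphere $S_\epsilon$ and uses that its area tends to zero.
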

\begin{proof}
Let $\alpha$ be any smooth, compactly supported $(n-\ell-1)$-form on $Y$. Letting 
$B_\epsilon$ denote the $\epsilon$-ball around $p$ for any small $\epsilon$, and setting $S_\epsilon= \partial B_\epsilon$, we then have 
$$
\int_Y  \varphi \wedge d\alpha = \lim_{\epsilon\to 0} \int_{Y-B_\epsilon} \varphi \wedge d\alpha = \pm  \lim_{\epsilon\to 0} \int_{Y-B_\epsilon} d(\varphi \wedge \alpha )
 =\mp \lim_{\epsilon\to 0} \int_{S_\epsilon} \varphi \wedge \alpha  =  0,
$$
because $\varphi \wedge \alpha$ is bounded,  and the area of $S_\epsilon$ tends to zero as $\epsilon \to 0$. Hence 
the $L^\infty$ form $\varphi$ satisfies  $d\varphi =0$ in the   sense of  currents.
Similarly,  if $\beta$ is any smooth, compactly supported 
$(\ell-1)$-form on $Y$, then 
$$
\int_Y  \varphi  \wedge \star d\beta   = \lim_{\epsilon\to 0} \int_{Y-B_\epsilon}   (d\beta ) \wedge \star \varphi = \lim_{\epsilon\to 0} \int_{Y-B_\epsilon}   d(\beta  \wedge \star \varphi )= -\lim_{\epsilon\to 0} \int_{S_\epsilon} \beta  \wedge \star \varphi  =  0,
$$
so that $d (\star \varphi ) =0$ in the sense of currents, too. Thus $\Delta \varphi =0$ in the distributional sense, and  elliptic regularity then guarantees that $\varphi$ is a smooth
 $\ell$-form on $Y$. Since 
$\varphi$ is both closed and co-closed on the open  dense subset  $Y-\{ p\}$, it therefore follows by continuity
that its extension is also closed and co-closed on all of  $Y$. \end{proof}

We now restrict our attention to the problem at hand. Let $(M,[g])$ be a smooth compact $4$-manifold produced from a quasi-Fuschsian hyperbolic 
$3$-manifold $(X,h)$ and a (possibly empty) quantizable configuration of points $\{ p_1, \ldots , p_k\}$ by the hyperbolic-ansatz construction. Thus, 
$(M,[g])$ comes equipped with a conformally  isometric $S^1$-action such that $\overline{X}= M/S^1$, and such that
$P\to X- \{ p_1, \ldots , p_k\}$ is the union of the free  $S^1$-orbits. We can thus represent $[g]$ by a smooth metric of the form
$g={\zap u}^2 g_0$, where $g_0$ is given by \eqref{startup}, and where ${\zap u} : \overline{X}\to [0,\infty )$ is a smooth non-degenerate defining function for
$\partial \overline{X} = \Sigma_-\sqcup \Sigma_+$. Let $\omega$ be a non-trivial self-dual $2$-form on $(M,[g])$. In particular, the restriction of
$\omega$ to the dense subset $P$ is also non-trivial, and our previous calculations then show that $\psi = -\xi \lrcorner \omega$ therefore defines a strongly 
harmonic $1$-form on $(X -\{ p_1, \ldots , p_k\}, h)$. However,  $|\omega|_g$ is  bounded on $M$ by compactness, so $|\omega|_{g_0}= {\zap u}^2 |\omega|_g$
is  uniformly bounded on $P$. Equation \eqref{norman} therefore tells us  that $|\psi|_h= |\omega|_{g_0}/\sqrt{2}$  is uniformly bounded on 
$X-\{ p_1, \ldots  p_k\}$, and  $\psi$ consequently extends to all of $X$ as a strongly harmonic $1$-form by Lemma \ref{removable}. 

However, even more is true. Notice that we can define a smooth Riemannian metric on $\overline{X}$ by $\bar{h}:={\zap u}^2 h$, and we then have
$|\psi|_{\bar{h}}= {\zap u}^{-1} |\psi|_h = {\zap u}^{-1}|\omega|_{g_0}/\sqrt{2}= {\zap u}|\omega|_{g}/\sqrt{2}$. This shows that $\psi$ has a continuous
extension to the boundary of $\overline{X}$ by zero. Now notice that any loop in $X\approx \Sigma \times (0,1)$ is freely homotopic to a loop that is arbitrarily close to $\partial
\overline{X}$, and on which the integral of $\psi$ is therefore as small as we like. But  $\psi$ is closed, and its integral on a loop is therefore
invariant under free homotopy. This shows that the integral of $\psi$ on any loop must vanish, and that $[\psi]\in H^1(X,\RR)$ therefore vanishes. 
Thus $\psi$ is exact, and we therefore have $\psi = d{f}$ for some smooth function on $X$. Moreover, since $d\star \psi =0$, we have
$\Delta {f}=-\star d\star d {f} = 0$, so ${f}$ is therefore a harmonic function on $(X,h)$. Since we can explicitly construct
${f}$ from $\psi$ by integration along paths from a base-point, and since $\psi \to 0$ at $\partial X$, this harmonic function on $X$
tends to a constant on each boundary component, and, since $df=\psi\not\equiv 0$,  the values on the two boundary components must moreover be different by 
the maximum principle. By adding a constant if necessary, we can now arrange for ${f}$ to tend to zero  at $\Sigma_-$, and, at the price
of perhaps replacing $\omega$ with a constant multiple, we can then also arrange for ${f}$ to tend to $1$ along $\Sigma_+$. This means
that we have arranged for ${f}$ to exactly be the tunnel-vision function ${\zap f}$ of Definition \ref{tunfun}. This proves the following result:

\begin{prop} Let $(M,[g])$ be an anti-self-dual $4$-manifold arising via the hyperbolic ansatz from a quasi-Fuchsian hyperbolic $3$-manifold $(X,h)$ of Bers type
and a (possibly empty) quantizable configuration $\{ p_1, \ldots , p_k\}$ of points in $X$. Then any  self-dual harmonic form on $(M,[g])$ 
restricts to the open dense subset $P\subset M$ as a constant multiple of 
$$
\omega : = d{\zap f}\wedge \theta  + V \star d{\zap f}, 
$$
where ${\zap f}$ and $\star$ are respectively  the tunnel-vision function and Hodge star of the quasi-Fuchsian hyperbolic $3$-manifold $(X,h)$,   while 
$V$ is the potential assigned to  the configuration $\{ p_1, \ldots , p_k\}$ by \eqref{potential}, and $\theta$ is the connection $1$-form with $d\theta= \star dV$
used to construct $[g]$ via  \eqref{ansatz0}. 
\end{prop}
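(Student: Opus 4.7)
The plan is to reduce the problem to an analysis of strongly harmonic $1$-forms on the hyperbolic $3$-manifold $(X,h)$, pushing the self-dual harmonic $2$-form $\omega$ down through the $S^1$-quotient $P\to X-\{p_1,\ldots,p_k\}$.

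I would first establish $S^1$-invariance of $\omega$. Since $M\approx(\Sigma\times S^2)\#k\overline{\CP}_2$ has $b_+=1$, the space of self-dual harmonic $2$-forms on $(M,[g])$ is one-dimensional, and the conformal $S^1$-action preserves this line. As $S^1$ is connected while the group of linear automorphisms of $\RR$ is not, the induced action on the line must be trivial, so $\omega$ is genuinely $S^1$-invariant. Restricting $\omega$ to the open dense set $P$ and applying the algebraic decomposition derived in the preceding paragraphs, I can then write $\omega=\psi\wedge\theta+V\star\psi$ for a unique $1$-form $\psi$ on $X-\{p_1,\ldots,p_k\}$; the computation from the excerpt shows that invariance plus $d\omega=0$ forces $\psi$ to be strongly harmonic there.

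The third step is to extend $\psi$ to a strongly harmonic form on all of $\overline{X}$. Compactness of $M$ implies $|\omega|_g$ is bounded; combined with \eqref{norman} and the conformal factor ${\zap u}$, this gives a uniform bound on $|\psi|_h$ near each $p_j$, so Lemma \ref{removable} furnishes a smooth extension across the configuration points. Passing to the compactified representative $\bar h={\zap u}^2h$ on $\overline{X}$, the identity $|\psi|_{\bar h}={\zap u}|\omega|_g/\sqrt{2}$ shows that $\psi$ extends continuously by zero to $\partial\overline{X}$.

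Finally, I would exploit the topology of $X\approx\Sigma\times(0,1)$: every loop in $X$ is freely homotopic into an arbitrarily small neighborhood of $\partial\overline{X}$, where $\psi$ is as small as desired. Since $\psi$ is closed, its periods all vanish, so $[\psi]=0\in H^1(X,\RR)$ and $\psi=d{f}$ for a smooth function ${f}$ on $X$. The condition $d\star\psi=0$ then becomes $\Delta{f}=0$; the boundary vanishing of $\psi=d{f}$ forces ${f}$ to be constant on each component of $\partial\overline{X}$, and the maximum principle (combined with $\psi\not\equiv 0$) ensures these two constants differ. After rescaling $\omega$ by a real number and shifting ${f}$ by a constant, I may arrange ${f}\equiv 0$ on $\Sigma_-$ and ${f}\equiv 1$ on $\Sigma_+$, identifying ${f}$ with the tunnel-vision function ${\zap f}$ of Definition \ref{tunfun}. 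The main technical obstacle is the boundary analysis: one must carefully track the two distinct conformal rescalings $g={\zap u}^2g_0$ on $M$ and $\bar h={\zap u}^2h$ on $\overline{X}$ to deduce that $\psi$ vanishes at $\partial\overline{X}$, since this is precisely what makes $\psi$ exact and so enables the identification with ${\zap f}$.
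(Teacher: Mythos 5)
Your argument is correct and follows essentially the same route as the paper: decompose the $S^1$-invariant form as $\omega=\psi\wedge\theta+V\star\psi$, show $\psi$ is strongly harmonic, extend it across the configuration points via the removable-singularities lemma and by zero to $\partial\overline{X}$ using the norm identity, then kill the periods and identify the resulting harmonic potential with the tunnel-vision function. Your explicit justification of the $S^1$-invariance of $\omega$ via $b_+=1$ is a welcome addition the paper leaves implicit (note only that a homomorphism $S^1\to\RR^+$ is trivial because its image is a compact subgroup, not merely because $S^1$ is connected).
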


To fully exploit this observation, however, we will still need one other key fact about the tunnel-vision function:

\begin{lem} 
\label{expansion}
Let  $(X,h)$ be a quasi-Fuchsian hyperbolic $3$-manifold of Bers type, and let ${\zap f}: \overline{X}\to [0,1]$ be its
tunnel-vision function. Then at every point of $\partial \overline{X}$,  the first normal derivative of ${\zap f}$ is zero, but the 
second normal derivative of ${\zap f}$ is non-zero.
\end{lem}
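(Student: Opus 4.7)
My plan is to work in the upper-half-space model of $\mathcal{H}^3$ locally near a chosen boundary point of $\overline{X}$. Fix $p \in \partial \overline{X}$; without loss of generality take $p \in \Sigma_+$, and pick a half-space chart in which a lift of $p$ corresponds to the origin of $\partial \mathcal{H}^3 = \mathbb{R}^2 \times \{0\}$. In these coordinates ${\zap f}$ pulls back to a function $u(x, y, z)$ that is hyperbolic-harmonic on $\{z > 0\}$ with $u|_{z=0} \equiv 1$, and by the same boundary regularity theorem for degenerate elliptic operators already invoked in the paper (\cite{graham}, Theorem~11.7), $u$ extends smoothly to $\{z \ge 0\}$.

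In these coordinates the hyperbolic Laplacian takes the form $\Delta_h u = z\,\partial_z u - z^2(\partial_x^2 + \partial_y^2 + \partial_z^2)u$, so harmonicity reads $\partial_z u = z\, \Delta_E u$, where $\Delta_E$ is the Euclidean Laplacian. Inserting the smooth Taylor expansion
\[
u(x, y, z) \;=\; 1 + z\,a_1(x, y) + z^2\,a_2(x, y) + z^3\,a_3(x, y) + \cdots
\]
and matching coefficients in $z$ yields $a_1 \equiv 0$ at order $z^0$ and $a_3 \equiv 0$ at order $z^2$. This delivers the first statement of the lemma and reduces the second to the assertion $a_2(0, 0) \neq 0$.

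For this I would exploit the realization of ${\zap f}$ as harmonic measure established earlier. The lift of ${\zap f}$ to $\mathcal{H}^3$ is the hyperbolic Poisson integral of $\chi_{\Omega_+}$, and in the half-space model the hyperbolic Poisson kernel is $P_h\bigl((0, 0, z_0), \xi\bigr) = z_0^2/[\pi(|\xi|^2 + z_0^2)^2]$, so
\[
1 - u(0, 0, z_0) \;=\; \int_{\mathbb{R}^2 \setminus \Omega_+} \frac{z_0^2\,d\xi}{\pi(|\xi|^2 + z_0^2)^2}.
\]
Since $\Omega_+$ is open and contains $0$, the integration domain is bounded away from $0$; expanding the integrand in powers of $z_0$ and integrating termwise yields $1 - u(0, 0, z_0) = C z_0^2 + O(z_0^4)$ with $C = \pi^{-1} \int_{\mathbb{R}^2 \setminus \Omega_+} |\xi|^{-4}\,d\xi$. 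Comparing with the Taylor expansion forces $a_2(0, 0) = -C$. A symmetric computation near points of $\Sigma_-$ (with the roles of $\Omega_\pm$ exchanged and $u|_{z=0}\equiv 0$) yields $a_2 > 0$ there.

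The main obstacle — and the only place genuine geometry enters — is the strict positivity of $C$ at every boundary point. This ultimately reflects the fact that a Bers-type quasi-Fuchsian group is nonelementary: the component $\Omega_-$ of $S^2 \setminus \Lambda(\varGamma)$ is a nonempty open set, so $\mathbb{R}^2 \setminus \Omega_+$ contains a small Euclidean disk bounded away from $0$ and hence contributes positive mass to the integral defining $C$. Everything else is a routine power-series manipulation.
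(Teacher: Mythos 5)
Your proof is correct, but it takes a genuinely different route from the paper's. The paper never writes a Taylor expansion or the indicial equation: for a boundary point of $\Sigma_+$ it chooses round disks $D_+\subset\Omega_+\subset D_-$, uses monotonicity of harmonic measure to sandwich ${\zap f}_{D_+}\leq {\zap f}\leq {\zap f}_{D_-}$, and reads off from the explicit half-plane model ${\zap f}_0=\frac12\bigl(y/\sqrt{y^2+z^2}+1\bigr)$ that $\sqrt{1-{\zap f}_\pm}$ are non-degenerate defining functions, so that $u^2\leq 1-{\zap f}\leq \tilde u^2$ pins down the order of vanishing along the normal. You replace the sandwich by a direct term-by-term expansion of the Poisson integral of $\chi_{\RR^2\setminus\Omega_+}$, supplemented by the indicial recursion $\partial_z u = z\Delta_E u$, which gives $a_1\equiv 0$ (and $a_3\equiv 0$) everywhere at once. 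Both proofs rest on the same two inputs --- the realization of ${\zap f}$ as harmonic measure and the non-triviality of the complementary region --- but yours needs only that $\RR^2\setminus\Omega_+$ have positive Lebesgue measure (rather than contain a disk) and identifies the exact coefficient $a_2=-\pi^{-1}\int_{\RR^2\setminus\Omega_+}|\xi|^{-4}\,d\xi$, where the paper's comparison yields only two-sided bounds. One small caution: Graham's theorem concerns smooth boundary data, whereas $\chi_{\Omega_+}$ is discontinuous along $\Lambda$, so you should either invoke the local version of that boundary-regularity statement near points where the data is locally constant, or observe that your Poisson-kernel expansion, carried out along the normals over all nearby boundary points, already supplies the $C^2$ regularity needed to speak of the second normal derivative; the paper is, if anything, less explicit about this point, since its sandwich argument also presupposes that the second normal derivative exists.
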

\begin{proof} Recall that $\partial \overline{X}= \Sigma_+ \sqcup \Sigma_-$. 
It will suffice to show that
\begin{itemize}
\item near any point of $\Sigma_-$, there are two local non-degenerate  local defining functions $u$ and $\tilde{u}$ for 
$\partial \overline{X}$ such that  ${u}^2 \leq {\zap f} \leq \tilde{u}^2$   near the given point; and that 
\item near any point of $\Sigma_+$, we can 
similarly find two local non-degenerate  local defining functions $u$ and $\tilde{u}$ such that ${u}^2 \leq 1-{\zap f} \leq \tilde{u}^2$  near the given point. 
\end{itemize}

Let us first see what happens in the Fuchsian case. Here, the decomposition $\CP_1 = \Omega_+\sqcup \Omega_- \sqcup \Lambda$
is just the decomposition of the sphere at infinity  into a geometric circle and two geometric open disks. In the upper half-space
model, we can thus take $\Omega_+$ and $\Omega_-$  to be the halves  of the $xy$-plane respectively given  by $y > 0$ and $y< 0$. 
In this prototypical situation, the tunnel-vision function just pulls back to become 
$${\zap f}_0= \frac{1}{2}\left( \frac{y}{\sqrt{y^2+z^2}}+ 1\right)$$
which is  harmonic  on $z> 0$ with respect to $h= (dx^2+dy^2+dz^2) /z^2$,  equals $1$ when $y> 0$ and $z=0$,  and equals $0$ when 
$y< 0$ and $z=0$. Now notice that,  when  $z$ is small,  ${\zap f}_0 = (z/2y)^2+ O( (z/y)^3)$ when $y< 0$, while $1-{\zap f}_0 = (z/2y)^2+ O( (z/y)^3)$
when $y > 0$. It  thus  follows that $\sqrt{{\zap f}_0}$ and $\sqrt{1-{\zap f}_0}$ are themselves smooth non-degenerate defining functions for
these boundary half-planes, and we are thus free to take $u=\tilde{u}$ to be these defining functions to emphasize that the claim is certainly
true in this prototypical case. 

Now, in  the general quasi-Fuchsian case, we again have $\CP_1 = \Omega_+\sqcup \Omega_- \sqcup \Lambda$, but $\Lambda$ will 
 just be a quasi-circle, and the open sets $\Omega_\pm$ could be dauntingly complicated. However, if ${\zap p}$ is any point of $\Sigma_+$, we can 
still represent it in the universal cover by some ${\zap q}\in \Omega_+$, and, since $\Omega_+$ is open in $\CP_1$, we may choose some closed geometric disk 
$D_+$ such that ${\zap y}\in D_+ \subset \Omega_+$; moreover, since $\Omega_-$ is also open, 
we can also choose a second closed disk $D_-\subset\CP_1$ such that 
$\Omega_+ \subset D_-$ by taking $\CP_1- D_-$ to be a small open disk around some $\tilde{\zap q}\in\Omega_-$.  
Now let ${\zap f}_\pm$ be the harmonic functions on $\mathcal{H}^3$ whose values at $p$ are  the average values of the characteristic
functions of $D_\pm$ with respect to the visual measure at $p$. We then immediately have 
\begin{equation}
\label{shopper}
{\zap f}_+  \leq {\zap f} \leq {\zap f}_-
\end{equation}
everywhere, because $D_+\subset \Omega_+ \subset D_-$.  On the other hand, our discussion of the Fuchsian case shows that
$u = \sqrt{1-{\zap f}_-}$ and $\tilde{u}= \sqrt{1-{\zap f}_+}$ are non-degenerate defining functions for $S^2= \partial \overline{\mathcal{H}^3}$ near ${\zap q}$, and 
our last inequality then becomes 
$${u}^2 \leq 1 - {\zap f} \leq \tilde{u}^2$$
as desired. 
On the other hand, if we instead take $\tilde{\zap q}\in \Omega_-$ to represent a given point $\tilde{\zap p}\in \Sigma_-$, our Fuchsian discussion
shows that $u=\sqrt{{\zap f}_+}$ and $\tilde{u}= \sqrt{{\zap f}_-}$ are non-degenerate defining functions for $S^2= \partial \overline{\mathcal{H}^3}$ near $\tilde{\zap q}$,
and we then have 
$${u}^2 \leq  {\zap f} \leq \tilde{u}^2, $$
exactly as required. This shows that ${\zap f}$ has vanishing first normal derivative but non-zero second normal derivative at every point of $\partial \overline{X}$, as claimed. 
\end{proof}

This now allows us to prove  the main result of this section. 

\begin{thm}
\label{pika}
Let $(M,[g])$ be an anti-self-dual $4$-manifold arising via the hyperbolic ansatz from a quasi-Fuchsian $3$-manifold $(X,h)$ of Bers type
and a (possibly empty) quantizable configuration of points in $X$. Then the anti-self-dual conformal class $[g]$ contains 
an almost-K\"ahler metric $g\in [g]$ if and only if the tunnel-vision function ${\zap f} : X\to (0,1)$ has no critical points. 
\end{thm}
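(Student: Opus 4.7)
The plan is to reduce the theorem to an explicit computation of the pointwise norm of the unique (up to scale) self-dual harmonic $2$-form. Since $M \approx (\Sigma \times S^2) \# k \overline{\CP}_2$ has $b_+(M) = 1$, the space of self-dual harmonic $2$-forms on $(M,[g])$ is one-dimensional, spanned by some $\omega$. As noted earlier in the paper, $[g]$ contains an almost-K\"ahler representative if and only if this $\omega$ is everywhere non-zero: the ``if'' direction is the observation that $\hat g = u^2 g$ with $u = (|\omega|_g/\sqrt 2)^{1/2}$ is almost-K\"ahler with respect to $\omega$, and the ``only if'' direction is that the K\"ahler form of any such $\hat g$ would be a nonvanishing self-dual harmonic $2$-form, hence a constant multiple of $\omega$. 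So it suffices to determine where $\omega$ vanishes on $M$.

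On the open dense piece $P \subset M$, the preceding proposition identifies $\omega$ (up to a nonzero constant) with $d{\zap f}\wedge \theta + V\star d{\zap f}$, and then equation \eqref{norman} together with the conformal rescaling rule gives
\[
|\omega|_g \;=\; \sqrt{2}\,{\zap u}^{-2}\,|d{\zap f}|_h
\]
on $P$. Since ${\zap u}$ is a positive defining function for $\partial\overline X$ and in particular smooth and nonzero on the interior, on $P$ itself the form $\omega$ vanishes at a fibre over $q \in X-\{p_1,\ldots,p_k\}$ precisely when $q$ is a critical point of ${\zap f}$.

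Next I would analyze the closed locus $M\setminus P = \Sigma_+\sqcup\Sigma_-\sqcup\{\hat p_1,\ldots,\hat p_k\}$ by continuity of $|\omega|_g$, which holds because $\omega$ is smooth on $M$. Near $\Sigma_\pm$, Lemma \ref{expansion} gives local estimates $c_1{\zap u}^2 \le {\zap f} \le c_2{\zap u}^2$ (or the analogous bounds on $1-{\zap f}$), which imply that ${\zap f}$ has a nonvanishing Hessian-type leading term there; combined with $|d{\zap u}|_h \sim {\zap u}$ in the hyperbolic metric, this yields $|d{\zap f}|_h \sim {\zap u}^2$. Thus $|\omega|_g$ extends continuously to a strictly positive value along $\Sigma_\pm$, so $\omega$ is automatically nonvanishing on the boundary surfaces, regardless of the critical-point structure of ${\zap f}$ there. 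Near each fixed point $\hat p_j$, by contrast, ${\zap u}$ is bounded away from zero, ${\zap f}$ is smooth, and hence the right-hand side $\sqrt 2\,{\zap u}^{-2}|d{\zap f}|_h$ extends continuously across $p_j$ to the value $\sqrt 2\,{\zap u}(p_j)^{-2}|d{\zap f}(p_j)|_h$; by continuity of $|\omega|_g$ this equals $|\omega|_g(\hat p_j)$, so $\omega(\hat p_j) = 0$ if and only if $p_j$ is a critical point of ${\zap f}$.

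Assembling these three cases, $\omega$ is nowhere zero on $M$ exactly when $d{\zap f}$ is nonvanishing on all of $X$ (the boundary behaviour being automatic), which is the stated criterion. The main delicate step is the boundary analysis at $\Sigma_\pm$: one must know that ${\zap f}$ vanishes to exactly quadratic order and not faster, because a higher-order vanishing would force $|\omega|_g \to 0$ at the boundary and destroy the almost-K\"ahler property irrespective of what ${\zap f}$ does in the interior. This is precisely the content of Lemma \ref{expansion}, which is why that lemma was proved immediately before this theorem.
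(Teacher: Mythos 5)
Your proposal is correct and follows essentially the same route as the paper's proof: reduce to the nonvanishing of the unique (up to scale) self-dual harmonic form, use the identification $|\omega|_g=\sqrt{2}\,{\zap u}^{-2}|d{\zap f}|_h$ on $P$ to detect interior zeros at critical points of ${\zap f}$, and invoke Lemma \ref{expansion} to show $\omega$ is automatically nonzero along $\Sigma_\pm$. Your explicit treatment of the isolated fixed points $\hat p_j$ and your remark that one needs vanishing to \emph{exactly} quadratic order at the boundary are slightly more detailed than the paper's wording, but the argument is the same.
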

\begin{proof} The conformal class $[g]$ contains an almost-K\"ahler representative iff the non-trivial self-dual harmonic form $\omega$ satisfies
$\omega \neq 0$ everywhere. We have just shown that, possibly after multiplying $\omega$ by a non-zero constant, we may assume
that it is associated with the $1$-form $\psi= d{\zap f}$ on $\overline{X}$. Since this means that $\sqrt{2} |d{\zap f}|_{\bar{h}}= {\zap u}|\omega|_{g}$
with respect to any $S^1$-invariant metric in the conformal class $[g]$, a necessary condition for $\omega$ to be everywhere non-zero
is that we must have $d{\zap f}\neq 0$ away from $\partial \overline{X}$. Conversely, if ${\zap f}$ has no
critical points in $X$, the same calculation implies that $\omega$ must be non-zero away from the surfaces $\Sigma_+$ and $\Sigma_-$. 
On the other hand, regardless of the detailed behavior of ${\zap f}$, Lemma \ref{expansion} 
 shows that ${\zap u}^{-1}|d{\zap f}|_{\bar{h}}$ always has non-zero limit at every point of $\partial \overline{X}$,
so we {\em always} have $\omega\neq 0$ at every point on the surfaces $\Sigma_\pm$. 
 This shows  that $\omega\neq 0$ on all of $M$ unless  the tunnel-vision function ${\zap f}$ has a critical point somewhere in
  $X\subset \overline{X}$. 
\end{proof}

\section{Tunnel-Vision Critical Points}

Theorems \ref{main1} and \ref{main2} will now follow
from Theorem \ref{pika}
 if we can produce an appropriate  sequence of   quasi-Fuchsian hyperbolic $3$-manifolds $(X,h)$ 
whose   tunnel-vision functions ${\zap f}: X\to (0,1)$ have critical points. 
The first step is to show that any Jordan curve can be approximated
by the limit set of a suitable quasi-Fuchsian group $\varGamma$. Our 
proof is based on the measurable Riemann mapping theorem in this section,  even though 
 a more elementary and constructive proof  can be given using    concrete reflection
groups.
This lemma  is sometimes attributed to Sullivan and
Thurston  \cite{sulthur}, who used the idea  to construct  4-manifolds with unusual affine structures.

In what follows, we will work in the upper-half-space model of $\mathcal{H}^3$, so that $\CC= \RR^2$ will represent
the complement of a point in the sphere at infinity, even though, as a matter of convention, we will  find it convenient
to endow it with  its usual Euclidean 
metric. The latter will in particular allow us to speak of the   Hausdorff distance between
 two compact subsets,  meaning by definition   the infimum of 
all $\epsilon>0$ so that each set is contained in the 
$\epsilon$-neighborhood of the other.

\begin{lem}\label{closer}
For any piecewise smooth Jordan curve $\gamma\subset \CC$   and any $\varepsilon >0$,  there is a positive integer $N$ such that, for every compact oriented surface
$\Sigma$ of genus ${\zap g} \geq N$, 
there is quasi-Fuchsian group $\varGamma\cong \pi_1(\Sigma )$ of Bers type whose limit 
set $\Lambda ( \varGamma)\subset \CC\subset \CP_1$ is a quasi-circle whose   Hausdorff distance  from $\gamma$ is less than $\varepsilon$. 
Moreover, if $\gamma$ is invariant under  $\zeta
\mapsto -\zeta$, and if ${\zap g}$ is even, we can arrange for $\Lambda (\varGamma )$  to also 
be invariant under reflection  through the origin. 
\end{lem}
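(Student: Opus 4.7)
The plan is to realize $\Lambda(\varGamma)$ as the image of $\hat{\RR} := \RR \cup \{\infty\}$ under a quasi-conformal homeomorphism built, via the measurable Riemann mapping theorem, from an equivariant Beltrami coefficient on a cocompact Fuchsian surface of sufficiently large genus. Since $\gamma$ is piecewise smooth and cusp-free, it is a chord-arc Jordan curve, hence a quasi-circle: there is a quasi-conformal homeomorphism $\Phi_0 : \CP_1 \to \CP_1$, normalized to fix $0, 1, \infty$, with $\Phi_0(\hat{\RR}) = \gamma$, whose Beltrami coefficient $\mu_0 := \bar{\partial} \Phi_0 / \partial \Phi_0$ satisfies $k := \|\mu_0\|_\infty < 1$.

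For each $g \geq 2$, I would pick a torsion-free cocompact Fuchsian group $\varGamma_g^0 \subset PSL(2, \RR)$ of genus $g$, acting on the hyperbolic upper half-plane, together with a Dirichlet fundamental domain $D_g$ centered at a fixed base point. Since $D_g$ has hyperbolic area $4\pi(g-1)$, by varying $g$ and conjugating suitably one can arrange that $D_g$ contains a hyperbolic ball of radius $r_g \to \infty$. Define $\tilde{\mu}_g := \mu_0$ on $D_g$, extend by the $\varGamma_g^0$-equivariance rule $\tilde{\mu}_g(\gamma z) = \tilde{\mu}_g(z) \, \overline{\gamma'(z)} / \gamma'(z)$ for $\gamma \in \varGamma_g^0$, and set $\tilde{\mu}_g := 0$ on the lower half-plane. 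Then $\tilde{\mu}_g$ is $\varGamma_g^0$-equivariant, $\|\tilde{\mu}_g\|_\infty \leq k$, and $\tilde{\mu}_g \to \mu_0$ pointwise almost everywhere as $g \to \infty$. Let $\Phi_g : \CP_1 \to \CP_1$ be the normalized quasi-conformal map with Beltrami coefficient $\tilde{\mu}_g$ given by the measurable Riemann mapping theorem; the equivariance forces $\varGamma_g := \Phi_g \varGamma_g^0 \Phi_g^{-1}$ to consist of M\"obius transformations, yielding a Bers-type quasi-Fuchsian group isomorphic to $\pi_1(\Sigma)$ with limit set $\Lambda_g := \Phi_g(\hat{\RR})$, a quasi-circle. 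The Ahlfors-Bers stability theorem for the Beltrami equation (continuity of the normalized solution under uniformly bounded, a.e.-convergent Beltrami coefficients) gives $\Phi_g \to \Phi_0$ uniformly on $\CP_1$, whence $\Lambda_g \to \gamma$ in Hausdorff distance. Choosing $N$ large enough makes this distance less than $\varepsilon$.

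For the $\sigma$-symmetric case with $\sigma : \zeta \mapsto -\zeta$, the M\"obius transformation $\sigma$ fixes $0$ and $\infty$ and rotates $\mathcal{H}^3$ by $\pi$ around the vertical geodesic between them. A Riemann-Hurwitz computation shows that an orientation-preserving holomorphic involution on a closed genus-$g$ Riemann surface with exactly two fixed points descends to a surface of genus $g/2$, and such a configuration exists precisely when $g$ is even (realized concretely as the deck transformation of a double cover branched over two points). Pick $\varGamma_g^0$ (with $g$ even) so that $\mathcal{H}^2/\varGamma_g^0$ arises as such a branched double cover with $\sigma$ as the covering involution; by appropriate choice of the uniformization, $\sigma$ normalizes $\varGamma_g^0$ in $PSL(2, \CC)$. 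Arrange next that $\Phi_0$ is $\sigma$-equivariant, $\Phi_0(-z) = -\Phi_0(z)$, which is possible because $\hat{\RR}$ and $\gamma$ are both $\sigma$-invariant. Since $\sigma'(z) \equiv -1$, the Beltrami $\sigma$-equivariance condition reduces to $\mu_0(-z) = \mu_0(z)$; taking $D_g$ symmetric under $\sigma$ then makes $\tilde{\mu}_g$ automatically $\sigma$-equivariant, so $\varGamma_g$ is normalized by $\sigma$ and $\Lambda_g$ is $\sigma$-invariant. The principal technical hurdle throughout is the uniform convergence $\Phi_g \to \Phi_0$ on the whole of $\CP_1$, which rests on classical stability results for the Beltrami equation.
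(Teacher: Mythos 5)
Your overall strategy --- push a Beltrami coefficient for $\gamma$ into a single large fundamental domain of a high-genus Fuchsian group, extend it equivariantly, solve with the measurable Riemann mapping theorem, and use Ahlfors--Bers continuity of normalized solutions to get Hausdorff convergence of the limit sets --- is exactly the one used in the paper, and the analytic core (uniform bound $\|\tilde\mu_g\|_\infty\le k<1$ plus a.e.\ convergence implies convergence of the normalized maps) is sound. But your choice of model creates genuine problems. First, the normalization is internally inconsistent: a homeomorphism $\Phi_0$ fixing $0,1,\infty$ cannot send $\hat\RR\ni\infty$ onto a bounded Jordan curve $\gamma\subset\CC$. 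More seriously, the symmetric case breaks: the M\"obius map $\sigma:\zeta\mapsto-\zeta$ \emph{interchanges} the upper and lower half-planes (it sends $i$ to $-i$), so $\langle\varGamma_g^0,\sigma\rangle$ does not act on the upper half-plane, $\sigma$ cannot be the holomorphic covering involution of a branched double cover of $\mathcal{H}^2/\varGamma_g^0$, and $\sigma$-equivariance of $\tilde\mu_g$ (which for $\sigma'\equiv-1$ reads $\tilde\mu_g(-z)=\tilde\mu_g(z)$) forces $\tilde\mu_g\equiv 0$ once you declare it zero on the lower half-plane. The repair is to work with the unit disk and $\TT$ as the model circle, as the paper does: there $\zeta\mapsto-\zeta$ is an elliptic rotation preserving the disk, it normalizes a suitably chosen genus-${\zap g}$ surface group for even ${\zap g}$ (inducing a holomorphic involution with two fixed points), and a fundamental domain centered at the fixed point is automatically invariant.

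Two smaller gaps. A piecewise smooth Jordan curve need not be a quasi-circle --- a cusp at a corner destroys the bounded-turning condition --- and your ``cusp-free'' proviso is not among the hypotheses; since only Hausdorff approximation is needed, one should first replace $\gamma$ by a nearby smooth curve, as the paper does. Also, you must choose $\Phi_0$ conformal on one side of the model circle from the outset (Riemann map to the exterior of $\gamma$, extended quasiconformally across the circle); otherwise truncating $\mu_0$ to zero on the lower half-plane changes the limiting map to the solution for $\mu_0\chi_{\mathcal{H}^2}$, whose image of the model circle need not be $\gamma$. With these repairs your argument coincides in essence with the published one; the only real variation is that you rely on a.e.\ convergence of the full Beltrami coefficient rather than first arranging, by a radial shrinking, that it be compactly supported in one fundamental domain --- a legitimate alternative.
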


\begin{proof} 
Since any piecewise smooth Jordan curve can be uniformly approximated by {smooth}  ones, 
we may  assume for simplicity that  the given Jordan curve $\gamma$ is actually  smooth. With this proviso, the 
Riemann mapping theorem allows us to construct a diffeomorphism
 $\Psi : \CC \to \CC$  that is holomorphic  outside 
the unit disk and maps the unit circle $\TT =\{ |\zeta|=1\}$ 
 to $\gamma$. The diffeomorphism $\Phi : \CC \to \CC$ defined by 
$\Phi (\zeta) = \Psi (\zeta/[1-\epsilon])$ is then  holomorphic outside  the disk $D=\{|\zeta|< 1-\epsilon\}$ 
and maps the unit circle to an approximation $\gamma_\epsilon =
\Phi (\TT)$ of $\gamma$ whose Hausdorff distance from $\gamma$ may be taken   to be  smaller than $\varepsilon/2$ by choosing  
 $\epsilon$ to be sufficiently  small. 
 
 Given any $\delta \in (0,  \epsilon )$, we now construct some $\TT$-preserving Fuchsian groups  with fundamental domains 
 containing the disk  $\{ |\zeta| < 1-\delta\}$. To this end, endow the open unit disk in $\CC$ with  the hyperbolic metric
 $4|d\zeta|^2/(1-|\zeta|^2)^2$, and, for an arbitrary positive integer ${\zap g}\geq 2$,  let  ${\zap P}$ be the regular hyperbolic $4{\zap g}$-gon whose 
  vertices are all equidistant from $0$, and whose interior angles 
  at these vertices  are all equal  to $\pi/2{\zap g}$. By drawing
  geodesic segments from $0$ to the $4{\zap g}$ vertices and  the $4{\zap g}$  midpoints of the sides of ${\zap P}$,
  we can then dissect ${\zap P}$ into $8{\zap g}$  hyperbolic isosceles right triangles, with interior angles $(\pi/2, \pi/4{\zap g}, \pi/4{\zap g})$. 
  Now label the oriented edges of ${\zap P}$ as $a_1$, $b_1$, $a_1^{-1}$, $b_1^{-1}$, \ldots , $a_{\zap g}$, $b_{\zap g}$, ${a_{\zap g}}^{-1}$, ${b_{\zap g}}^{-1}$,
  starting at some reference point and proceeding counter-clockwise, as indicated in Figure \ref{polygon}.  We can then construct  a genus-${\zap g}$ hyperbolic surface 
  $\Sigma$ by identifying the edges of ${\zap P}$ in pairs according to this labeling scheme. The universal cover of $\Sigma$ then becomes
  the open unit disk, and the fundamental group  $\pi_1(\Sigma )$ is then represented as
   a $\TT$-preserving Fuchsian group $\Gamma_{\zap g}$ with fundamental domain ${\zap P}$, 
  where the relevant deck transformations form a finite-index subgroup of the $(2, 4{\zap g}, 4{\zap g})$ triangle group that is generated by reflections 
  through the sides of the isosceles right triangles into which we dissected ${\zap P}$. Now let $\rad$  denote the hyperbolic distance from $0$
  to the midpoint of a side of ${\zap P}$, and let $\Rad$ denote the hyperbolic distance from $0$ to a vertex of ${\zap P}$. Then  ${\zap P}$
  contains the disk of hyperbolic radius $\rad$ about $0$, and is contained in the disk of hyperbolic radius $\Rad$ with the same center. 
 Moreover, since our isosceles right triangles have sides $\rad$, $\rad$, and $\Rad$, we have $\Rad < 2\rad$ by the triangle inequality.  
  Since the hyperbolic area of ${\zap P}$ is $4\pi ({\zap g} -1)$, and  since ${\zap P}$ is contained in a disk of hyperbolic radius $\Rad$, 
  with  hyperbolic area $2\pi (\cosh \Rad -1)< \pi e^\Rad$, it thus follows that   $4 ({\zap g} -1) < e^{\Rad} < e^{2\rad}$. On the other hand, if $\mathbf{r}$ is
  the Euclidean radius of the  disk of hyperbolic radius $\rad$, we have $\rad = \log (1+\mathbf{r})/(1-\mathbf{r})$. Thus 
  $$\frac{2}{1-\mathbf{r}} >  \frac{1+\mathbf{r}}{1-\mathbf{r}} >  2\sqrt{{\zap g}-1}$$
    and hence 
    $$\mathbf{r} > 1- \frac{1}{\sqrt{{\zap g}-1}}.$$
   This shows that  if 
    $${\zap g} \geq  N(\delta ) := 1 + \lceil \frac{1}{\delta^2}\rceil, $$
  our  fundamental domain ${\zap P}$ for the Fuchsian group  $\Gamma_{\zap g} \cong \pi_1(\Sigma )$ will contain the disk of Euclidean radius $1-\delta$ about $0$.

\begin{figure}[htb]
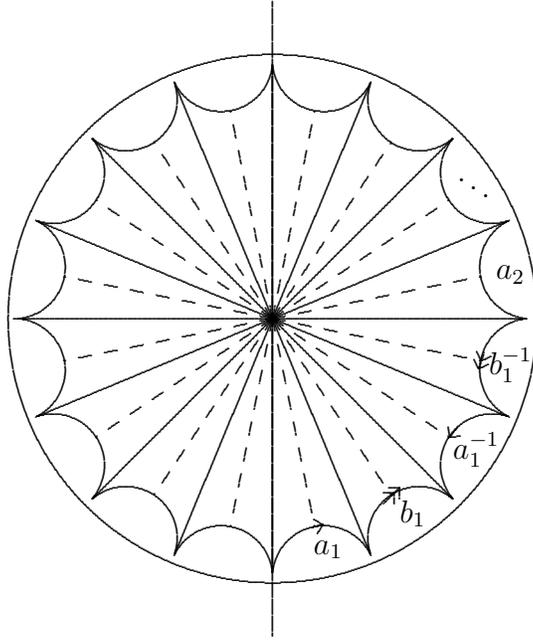

\centerline{
\beginpicture
\setplotarea x from -100 to 100, y from -110 to 100
\circulararc 360 degrees from  0 -100  center at 0 0
\circulararc -149  degrees from  96 0 center at 98 19.5
\circulararc 149  degrees from  96 0 center at 98 -19.5
\circulararc 149  degrees from   0 96 center at  19.5 98
\circulararc -149  degrees from   0 96 center at  -19.5 98
\circulararc 149  degrees from  -96 0 center at -98 19.5
\circulararc -149  degrees from  -96 0 center at -98 -19.5
\circulararc -149  degrees from   0 -96 center at  19.5 -98
\circulararc 149  degrees from   0 -96 center at  -19.5 -98
\circulararc -149  degrees from  89 37 center at 83 55.5
\circulararc 149  degrees from  37 89  center at  55.5 83
\circulararc -149  degrees from  -89 -37 center at -83 -55.5
\circulararc 149  degrees from  -37 -89  center at  -55.5 -83
\circulararc 149  degrees from  89 -37 center at 83 -55.5
\circulararc -149  degrees from  37 -89  center at  55.5 -83
\circulararc 149  degrees from  -89 37 center at -83 55.5
\circulararc -149  degrees from  -37 89  center at  -55.5 83
\put {$a_1$} [B1] at 21 -89
\put {$b_1$} [B1] at 53 -77
\put {$a_1^{-1}$} [B1] at 77 -53
\put {$b_1^{-1}$} [B1] at 90 -21 
\put {$a_2$} [B1] at 90 15
\put {$\ddots$} [B1] at 76 45
\arrow <3pt> [1,2] from 15 -79 to 19 -78
\arrow <3pt> [1,2] from 45 -67 to 48 -64
\arrow <3pt> [1,2] from 42 -70 to 46 -66
\arrow <3pt> [1,2] from   70 -41 to 67 -45
\arrow <3pt> [1,2] from   79  -16 to 78 -19
\arrow <3pt> [1,2] from   80  -13 to 79 -16
{\setlinear 
\plot  0 -120 0 120  /
\plot  0 0 96 0  /
\plot  0 0  -96 0 /
\plot  0 96  0 0 /
\plot  0 -96  0 0 /
\plot  0 0 68 68  /
\plot  0 0 -68 -68  /
\plot  0 0 68 -68  /
\plot  0 0 -68 68  /
\plot  0 0 37  89  /
\plot  0 0 -37  -89  /
\plot  0 0 37  -89  /
\plot  0 0 -37  89  /
\plot  0 0  89 37 /
\plot  0 0  -89 -37 /
\plot  0 0  89 -37 /
\plot  0 0  -89 37 /
\setdashes
\plot  0 0 79 16  / 
\plot  0 0 -79 -16  /
\plot  0 0 79 -16  /
\plot  0 0 -79 16  /
\plot  0 0 16  79  /
\plot  0 0 -16  -79  /
\plot  0 0 -16  79  /
\plot  0 0 16  -79  /
\plot  0 0 67 45  /
\plot  0 0  45 67 /
\plot  0 0 -67 -45  /
\plot  0 0  -45 -67 /
\plot  0 0 67 -45  /
\plot  0 0  45 -67 /
\plot  0 0 -67 45  /
\plot  0 0  -45 67 /
}
\endpicture}
\caption{ \label{polygon}
One can construct  an explicit  hyperbolic metric on a surface $\Sigma$ 
 of genus ${\zap g}\geq 2$ by starting with a  regular hyperbolic $4{\zap g}$-gon ${\zap P}$ whose internal angles equal to $\pi/2{\zap g}$,
and then identifying the sides of ${\zap P}$ in the  manner indicated. If the hyperbolic plane is represented by the
the unit disk,  equipped with the Poincar\'e metric, the Euclidean radius of the in-circle of   ${\zap P}$  approaches $1$ as the 
genus ${\zap g}$ tends to infinity. 
}
\end{figure}

Now assume that ${\zap g} \geq  N(\delta )$, and let $\mu = \Phi_{\bar{\zeta}}/\Phi_\zeta$ be the complex dilatation of $\Phi$. 
Since  $\mu$ is  supported in the disk $D$ of radius $1-\delta$, and since our assumption  guarantees that $D\subset {\zap P}$,
it now  follows that  $\mu$ is supported in the fundamental domain ${\zap P}$ of
$\Gamma_{\zap g}$. We can therefore  \cite[Chapter VI]{ahlfors}
  extend $\mu$ uniquely as a $\Gamma_{\zap g}$-equivariant bounded measurable function $\mu_\delta$ 
on $\CC$ which is supported in the unit disk and has $L^\infty$ norm $<1$; 
namely, 
we first decompose the unit disk as $\cup_{\phi\in \varGamma_{\zap g}}\phi({\zap P})$, then set 
$\mu_\delta := [(\frac{\bar{\phi^\prime}}{\phi^\prime}) \mu]\circ \phi^{-1}$ on each $\phi({\zap P})$, and 
finally  declare that $\mu_{\delta}=0$ outside the unit disk. 
The Measurable Riemann Mapping Theorem \cite{A-B:Beltrami} 
then 
guarantees the existence of a  quasiconformal map $\Phi_\delta$ with dilatation equal to 
 $\mu_\delta$, and the  $\Gamma_{\zap g}$-equivariance of  $\mu_\delta$ moreover guarantees that $\varGamma_\delta = \Phi_\delta \circ \Gamma_{\zap g} \circ \Phi_\delta^{-1}$
is a quasi-Fuchsian group of Bers type.  However,  because the region where $\mu \ne \mu_\delta$  is contained in the annulus
$\{ \zeta\,|\,1-\delta < |\zeta| < 1\}$, it follows that  $\Phi_\delta \to \Phi$ on compact subsets of the unit disk 
as $\delta \searrow 0$. The limit set $\Lambda_\delta = \Phi_\delta (\TT )$ therefore converges
to $\Phi (\TT ) =\gamma_\epsilon$ in the Hausdorff metric as we decrease $\delta$. By choosing $\delta$ sufficiently small,  we thus can 
arrange for $\Lambda_\delta$ to be within Hausdorff distance $\varepsilon/2$ of $\gamma_\epsilon$, and hence within  Hausdorff distance $\varepsilon$
of $\gamma$, as desired.

If $\gamma$ is invariant under reflection through the origin, then 
$\Psi$ and  $\Phi$ can  be chosen to share this symmetry, and  the dilatation $\mu$ is  then consequently  reflection-invariant, too. 
 The line joining  our reference point  to the origin now separates the sides of  ${\zap P}$ into two counter-clockwise lists of $2{\zap g}$ sides.
 If ${\zap g}$ is even, the number of entries on each of these lists is divisible by $4$, and since our listing of the sides as \ldots, $a_j$, $b_j$, $a_j^{-1}$, $b_j^{-1}$, \ldots
 breaks them into quadruples, our rules for identifying the sides  do not mix  the two lists. 
 Since  reflection through the origin is the same as a $180^\circ$ rotation, this involution  compatibly intertwines with 
our  rules for identifying the sides of ${\zap P}$ to obtain $\Sigma$, and so 
induces an  orientation-preserving isometry of the hyperbolic surface $\Sigma$ with two fix points --- namely, the origin and the equivalence class consisting of all the vertices
of ${\zap P}$. 
It follows that 
$\Gamma_{\zap g}$ and  reflection  through the origin  generate a group extension  
$$1\to \Gamma_{\zap g}\rightarrow  \widehat{\Gamma}_{\zap g} \to \ZZ_2\to 0,$$
where   $\widehat{\Gamma}_{\zap g}$ is a larger Fuchsian group, 
but 
is no longer torsion-free.  
Notice that each  of the halves
  into which ${\zap P}$ is divided by   our reference line is now a fundamental domain for the action of $\widehat{\Gamma}_{\zap g}$ on the unit disk. 
The reflection symmetry of $\mu$  thus guarantees that our $\Gamma_{\zap g}$-equivariant extension  $\mu_\delta$ of $\mu$  is actually $\widehat{\Gamma}_{\zap g}$-equivariant,
so that $\widehat{\varGamma}_\delta :=\Phi_\delta\circ \widehat{\Gamma}_{\zap g} \circ \Phi_\delta^{-1}$ 
 defines a  larger quasi-Fuchsian group.   
However, $\Lambda_\delta$ is  also  the limit set of the group $\widehat{\varGamma}_\delta$. On the other hand, 
  $\widehat{\varGamma}_\delta$ is
exactly the group generated by ${\varGamma}_\delta$ and reflection through the origin. Thus, in this situation, all of the constructed 
 limit sets  $\Lambda_\delta$  approximating  $\gamma$ are  
reflection-invariant, and all our claims have therefore been proved. 
 \end{proof}

When  $\varGamma$ is a quasi-Fuchsian group of Bers type, recall that 
 we have defined the tunnel-vision function ${\zap f}$ of $X=\mathcal{H}^3/\varGamma$
 to be the unique harmonic function on $X$ which tends to $0$ at one  component $\Sigma_-$ of $\partial\overline{X}$  and tends to $1$ at the other
component $\Sigma_+$ of $\partial\overline{X}$. 
The limit set $\Lambda= \Lambda (\varGamma)$ then divides $\CP_1$  into two connected components  
$\Omega_-$ and $\Omega_+$, which may be respectively identified with the universal covers of $\Sigma_-$ and $\Sigma_+$
and the pull-back   $\widetilde{\zap f}$ of ${\zap f}$ to   $\mathcal{H}^3$ then tends to 
$0$ at $\Omega_-$ and tends to $1$ at $\Omega_+$.  
 This means that we can reconstruct $\widetilde{\zap f}$ from 
the open set $\Omega_+$ using the {\em Poisson kernel} of $\mathcal{H}^3$. This 
$2$-form on the sphere at infinity, depending on a point in hyperbolic $3$-space, 
is explicitly given  \cite{davies} in the  upper half-space model by $P_{(x,y,z)} d\xi\wedge d\eta$, where 
\begin{equation}
\label{fishtale}
P_{(x,y,z)}(\xi,\eta) = \frac{1}{\pi} \left [ \frac {z}{(x-\xi)^2+(y-\eta)^2+z^2} \right ]^2 .
\end{equation}
Note that  the factor of $1/\pi$ is  inserted here to make the form have total mass $1$.
Up to a constant factor,  $P_{(x,y,z)}d\xi \wedge d\eta$ is 
just  the ``visual area form'' obtained by identifying the
unit sphere in $T_{(x,y,z)}\mathcal{H}^3$  with the sphere at infinity by 
following geodesic rays starting at $(x,y,z)$. 
Given a bounded measurable function $F(\xi, \eta)$ on the sphere at infinity, we can uniquely extend it  to $\mathcal{H}^3$ 
as a bounded harmonic function $f$ via the hyperbolic-space version of the {\em Poisson integral formula} 
\begin{equation}
\label{fishdish}
f(x,y,z) = \int_{\RR^2} F(\xi,\eta) P_{(x,y,z)}(\xi,\eta)  \thinspace d\xi \wedge d\eta ; 
\end{equation}
for more details, see \cite[Chapter 5]{davies}.
For us, the importance of this formula stems from the fact  that when $F=\chi_{\Omega_+}$ is the indicator function of $\Omega_+$,  
the resulting  $f$ is exactly the pull-back $\widetilde{\zap f}$ of the tunnel-vision function 
of  $X= \mathcal{H}^3/\varGamma$.

In the proof that follows, we will never use   the precise formula \eqref{fishtale} for the Poisson kernel, 
but will instead just make use  of the following qualitative properties of $P_{(x,y,z)}(\xi,\eta)$:
\begin{enumerate}[(I)]
 \item \label{uno}
 For any $ \lambda>0$,
       $$P_{(0,0,\lambda)}(\xi,\eta) = \lambda^{-2} 
           P_{(0,0,1)}(\lambda^{-1}  \xi, \lambda^{-1}  \eta ),$$ 
      so that the Poisson kernel scales under dilations 
     to preserve its mass;
       
  \item  
  \label{dos}On any compact 
disk $D_\varrho = \{ \xi^2+\eta^2 \leq \varrho^2\}$, the Poisson kernel $P_{(0,0,1)}(\xi,\eta)$ is bigger than a positive  constant, depending only on $\varrho$; 

  \item  \label{tres}
  $P_{(0,0,1)}(\xi,\eta) \leq 1$ on the whole plane; and 

  \item  \label{quatro}
  $\int_{\RR^2 \setminus D_\varrho} P_{(0,0,1)}(\xi,\eta) \thinspace d\xi \wedge d\eta \longrightarrow0$ 
   as $\varrho \to  \infty$.
\end{enumerate} 
These facts can all be  read off immediately from \eqref{fishtale}, 
although some readers might instead prefer to deduce them  from 
the Poisson kernel's geometric interpretation in terms of  visual area measure. 

Given a quasi-Fuchsian group $\varGamma$ of Bers type, 
we will now choose our upper-half-space model of $\mathcal{H}^3$
so that the point at infinity belongs to $\Omega_-= \Omega_-(\varGamma)$. Thus, 
the limit set $\Lambda = \Lambda (\varGamma)$ will always be presented as a Jordan curve in $\RR^2=\CC$, 
and $\Omega_+= \Omega_+(\varGamma)$ will always be the bounded component of the complement of $\Lambda\subset \CC$. 
Given a measurable subset $E$ of the plane, we will  let $f_E$ denote the function $f$ produced by \eqref{fishdish}
when $F=\chi_E$ is the indicator function of $E$. In particular, $f_{\Omega_+}$ is exactly the pull-back $\widetilde{\zap f}:\mathcal{H}^3 \to (0,1)$
of the tunnel-vision function ${\zap f}$ of $X=\mathcal{H}^3/\varGamma$.  

\begin{thm} \label{criticalpoint} There is an integer $N$  such that, for every closed oriented surface
$\Sigma$ of even genus ${\zap g}\geq N$, there is a 
 quasi-Fuchsian group $\varGamma\cong \pi_1(\Sigma )$  for which 
the corresponding tunnel-vision function ${\zap f}$ has at least two critical points. 
\end{thm}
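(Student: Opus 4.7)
The plan is to invoke Lemma~\ref{closer} with a carefully chosen model Jordan curve $\gamma_0$ whose associated Poisson integral $f_{\Omega_+^{\gamma_0}}$ has at least two critical points on the vertical axis of $\mathcal{H}^3$, and then to argue that these critical points persist once we replace $\gamma_0$ by a sufficiently close (and still reflection-symmetric) quasi-Fuchsian limit set. The reflection symmetry $\zeta\mapsto -\zeta$ plays an essential role: it lifts to the $180^\circ$-rotation about the vertical axis in the upper-half-space model, so the pull-back tunnel-vision function satisfies $\widetilde{\zap f}(-x,-y,z)=\widetilde{\zap f}(x,y,z)$, forcing $\partial_x\widetilde{\zap f}$ and $\partial_y\widetilde{\zap f}$ to vanish identically on the axis $\{(0,0,z):z>0\}$; thus any critical point of the axial restriction $z\mapsto \widetilde{\zap f}(0,0,z)$ is automatically a genuine critical point of $\widetilde{\zap f}$.

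For the model, I would take $\gamma_0$ to be the piecewise-smooth, $(\zeta\mapsto -\zeta)$-symmetric boundary of a ``dumbbell'' region $\Omega_+^0\subset\CC$: two small disks of radius $r$ centered at $(\pm L,0)$ joined by a thin horizontal strip of width $w$ through the origin, with parameters obeying $w\ll r^2/L\ll r\ll L$. Using the scaling property (I) and the explicit formula \eqref{fishtale}, a direct calculation shows that each disk contributes approximately $r^2 z^2/(L^2+z^2)^2$ to the axial profile, which is strictly maximized at $z=L$ with value $\sim r^2/L^2$, while the strip contributes a monotonically decreasing function of $z$ that starts at $1$ and behaves like $w/(2z)$ throughout the range $w\ll z\ll L$. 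Since $r^2/(wL)$ can be made arbitrarily large, the bulb contribution near $z=L$ strictly exceeds the strip remnant there, so the sum is not monotone; combined with the boundary values $f_{\Omega_+^0}(0,0,0^+)=1$ and $f_{\Omega_+^0}(0,0,\infty)=0$, this forces the smooth axial profile to possess at least one strict local minimum followed by one strict local maximum, and with some care one can arrange both to be non-degenerate.

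Given $\varepsilon>0$ small, Lemma~\ref{closer} then produces an integer $N$ such that for every even ${\zap g}\geq N$ there is a Bers-type $\varGamma\cong\pi_1(\Sigma)$ whose limit set $\Lambda(\varGamma)$ is $(\zeta\mapsto -\zeta)$-symmetric and within Hausdorff distance $\varepsilon$ of $\gamma_0$. The symmetric difference $\Omega_+(\varGamma)\,\triangle\,\Omega_+^0$ lies in an $\varepsilon$-tube about $\gamma_0$, hence has Lebesgue area $O(\varepsilon)$. Expressing the difference of the two axial profiles as a signed Poisson integral over this symmetric difference, and using that every $z$-derivative of $P_{(0,0,z)}(\xi,\eta)$ is uniformly bounded on any compact subinterval of $(0,\infty)$, one obtains $\widetilde{\zap f}(0,0,\cdot)\to f_{\Omega_+^0}(0,0,\cdot)$ in $C^2$ on any such subinterval. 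Therefore, for $\varepsilon$ sufficiently small, $\widetilde{\zap f}(0,0,\cdot)$ also has a strict local minimum and a strict local maximum, yielding two axial critical points of $\widetilde{\zap f}$. Because the axis endpoints $0$ and $\infty$ lie in $\Omega_+\sqcup\Omega_-$ and hence outside $\Lambda(\varGamma)$, no non-trivial element of $\varGamma$ can stabilize this geodesic, and these two critical points project to two distinct critical points of ${\zap f}$ on $X=\mathcal{H}^3/\varGamma$. The principal technical obstacle is the persistence step: one must verify that the non-monotonic axial behavior of the model survives the $O(\varepsilon)$ perturbation of the indicator function, and this is precisely what the symmetric-difference estimate and the uniform bounds on derivatives of the Poisson kernel provide.
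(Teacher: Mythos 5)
Your proposal follows essentially the same route as the paper: a reflection-symmetric dumbbell/dogbone model domain, Lemma \ref{closer} to realize a nearby symmetric quasi-Fuchsian limit set, the $180^\circ$-rotation symmetry to reduce the critical-point question to the axial profile $z\mapsto \widetilde{\zap f}(0,0,z)$, and Poisson-kernel estimates to show that this profile is non-monotone. The one organizational difference is the persistence step: you compare $f_{\Omega_+(\varGamma)}$ to the model $f_{\Omega_+^0}$ via a $C^2$ estimate on the Poisson integral over the symmetric difference, whereas the paper runs the kernel estimates directly on $\Omega_{+,\ve}$ itself, needing only that it contains two definite disks (for a lower bound at $z=1$) and has small area after dilation by $1/\epsilon$ (for an upper bound at $z=\epsilon$); this avoids having to control the symmetric difference at all, though your version also works once one verifies that Hausdorff closeness of the two Jordan curves really does force the symmetric difference of their interiors into the $\ve$-tube about $\gamma_0$.

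There is one genuine gap, in the last step. From $0,\infty\notin\Lambda(\varGamma)$ you correctly conclude that no non-trivial element of $\varGamma$ stabilizes the vertical axis, but that does not rule out some $\phi\in\varGamma$ carrying one axial critical point to the other without preserving the axis, in which case the two points would project to a \emph{single} critical point of ${\zap f}$ on $X=\mathcal{H}^3/\varGamma$. The clean fix --- the one the paper uses --- is to take the local maximum to be the \emph{first} one after the local minimum, so that the two critical points have different values of $\widetilde{\zap f}$; since $\widetilde{\zap f}$ is $\varGamma$-invariant, no deck transformation can then identify them. Your construction already yields this (the strict local minimum lies strictly below the first subsequent local maximum), so the gap is easily closed, but as written the deduction does not follow.
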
 

\begin{proof} 
Given $\epsilon >0$, set $\varepsilon= \epsilon^3$,  and let 
 $\varGamma_\varepsilon$ be any quasi-Fuchsian group  of Bers type whose limit set $\Lambda_\varepsilon$
is invariant under $\zeta\mapsto -\zeta$ and lies within Hausdorff distance $\varepsilon$ of  the boundary 
of the  ``dogbone'' domain
$$ \Omega_{\epsilon} = \{z:|z-1|<  \frac{1}{4}\} 
                  \cup\{z:  |z+1|< \frac{1}{4}  \}
                  \cup\{z: |z| < 1, |\text{Im}(z)|< \epsilon^3\}.
$$
illustrated by Figure \ref{dogbone1}. By Lemma \ref{closer}, such $\varGamma_\varepsilon$ 
exist for all even genera ${\zap g} \geq N$ for some $N$ depending on $\varepsilon$, and hence on $\epsilon$, but 
 we will not choose a specific $\epsilon$
until much later, so our notation   will help remind us of  this point. 
\bigskip

\begin{figure}[htb]
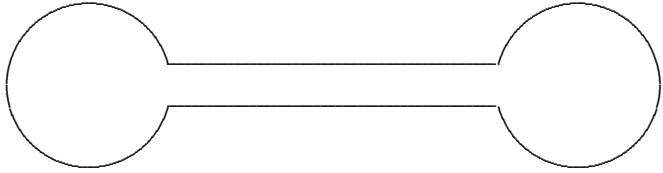

\centerline{
\beginpicture
\setplotarea x from 0 to 320, y from 0 to 60
\circulararc  330 degrees from 100 38 center at 70 30
\circulararc  -330 degrees from 225 38 center at 255 30
{\setlinear 
\plot  100 38 225 38  /
\plot  100 22 225 22  /
}\endpicture
}
\caption{ \label{dogbone1}
A ``dogbone'' domain. The harmonic measure of the interior defines
a harmonic function in the upper half-space that has at least two 
critical points. This phenomenon persists when the boundary curve is approximated
by 
 quasi-Fuchsian limit sets, using   
Lemma \ref{closer}.
}
\end{figure}

\noindent 
As before, we work in  the upper-half-space model $\RR^2 \times \RR^+$ of $\mathcal{H}^3$, 
with coordinates $(x,y,z)$, and follow the convention  that $\Omega_{+,\varepsilon}:= \Omega_+(\varGamma_\varepsilon)$ 
is always taken to be the region {\em inside} the Jordan curve $\Lambda_{\varepsilon}= \Lambda(\varGamma_\varepsilon)$. 
By Lemma \ref{closer}, the limit set $\Lambda_{\varepsilon}$ can always be chosen to be 
symmetric with respect to reflection though the origin in the  $\zeta=\xi+i\eta$ plane, 
and this symmetry then implies that the corresponding harmonic function $f_\epsilon := f_{\Omega_{+,\varepsilon}}$
 is then invariant under the isometry of $\mathcal{H}^3$ represented  in our upper-half-space model 
by reflection $(x,y,z)\mapsto (-x,-y,z)$ through the $z$-axis. Since this means that the gradient $\nabla f_\epsilon = h^{-1}(df_\epsilon, \cdot)$
of  $f_\epsilon$ with respect to the hyperbolic metic $h$ is also invariant under this isometry, it follows that, along the hyperbolic geodesic represented by 
the positive $z$-axis,
the gradient $\nabla f_\epsilon$ must be everywhere 
tangent to the axis. Thus, to show that $f_\epsilon$ has a critical point on the positive $z$-axis, 
it   suffices to show that its restriction to the  $z$-axis is neither monotonically increasing nor decreasing. 
However,    the maximum principle guarantees that  $0 < f_\epsilon < 1$ 
on all of $\uhs$, and  we also know that 
\begin{eqnarray*}
 \lim_{z \searrow 0} f_\epsilon(0,0,z) &=&1,\\
  \lim_{z \nearrow \infty} f_\epsilon(0,0,z) &=&0,
\end{eqnarray*}
since by construction $0 \in \Omega_{+,\varepsilon}$ and $\infty\in \Omega_{-,\varepsilon}$. 
We thus just need  to show that $f_\epsilon$ is not monotonically decreasing along the positive $z$-axis. 

We will do this by showing that 
\begin{equation}
\label{punchline}
f_\epsilon(0,0,\epsilon)  <  f_\epsilon(0,0,1)
\end{equation}
whenever $\epsilon$ is sufficiently small. To see this, first observe that property  \eqref{dos} of the Poisson kernel implies that 
$f_\epsilon(0,0,1)$ is bigger than a positive constant
independent of $\epsilon$, since, for $\epsilon < 1/2$, the region $\Omega_{+,\varepsilon}$ is contained in the disk of radius $3/2$   
about $\zeta=0$, and contains a pair of disks of Euclidean radius $1/8$. On the other hand,  if we  dilate the  upper half-space 
 by a factor of $1/\epsilon$, thereby mapping $(0,0,\epsilon)$  to $(0,0,1)$,  property \eqref{uno}  tells us that we can 
 calculate $f_\epsilon (0,0,\epsilon)$ by instead calculating the Poisson integral at $(0,0,1)$ while replacing $\Omega_{+,\varepsilon}$
 with its dilated image. However, 
 the dilated copy of $\Omega_{+,\varepsilon}$  meets the  disk of radius $1/\epsilon$  in a region of Euclidean area $< 4\epsilon$, 
 so property \eqref{tres} guarantees that 
 the contribution of  this large disk to the integral is $O(\epsilon)$; meanwhile,  property \eqref{quatro} guarantees that the
 the contribution of the exterior of this increasingly large  disk tends to zero as $\epsilon\searrow 0$.  
This establishes that \eqref{punchline} holds for all small $\epsilon$, since $f_\epsilon(0,0,1)$ is bounded away from zero. 
Moreover, this argument shows that this holds for a specific small $\epsilon$ that is 
  independent of the detailed geometry  of the approximating limit sets $\Lambda_\varepsilon$,
just as long as  they are within Hausdorff distance $\varepsilon= \epsilon^3$ of the  ``dogbone'' curve specified by the parameter $\epsilon$.
Thus, there is some specific small $\epsilon$ such that the restriction of the pulled-back tunnel-vision function $\widetilde{\zap f}= f_\epsilon$
associated with any allowed $\Lambda_{\varepsilon}$, where $\varepsilon= \epsilon^3$,  
has the property that its restriction to the $z$-axis has at least two critical points -- namely, a local minimum and a local maximum. Indeed, we 
can even arrange for $\widetilde{\zap f}$ to have different values at these two critical points of the restriction to the axis by insisting that the local maximum 
be the {\em first} local maximum after the local minimum. Since we have also arranged for   the approximating limit sets to be invariant under $\zeta\mapsto -\zeta$,
these critical points of the restriction are actually critical points of $\widetilde{\zap f}$. Since $\widetilde{\zap f}$ assumes different  values at this points, this shows 
that ${\zap f} : X\to \RR$ must have at least
two critical values whenever its limit set $\Lambda_\varepsilon$ satisfies our approximation hypotheses. 
Fixing this $\epsilon$, and applying Lemma \ref{closer} with  $\varepsilon=\epsilon^3$, we thus deduce that there is  
some $N$ such that, for every oriented surface $\Sigma$ of even genus 
${\zap g} \geq N$, there is a quasi-Fuchsian group $\varGamma \cong \pi_1 (\Sigma )$ for which the 
the tunnel-vision function ${\zap f}$ of $X=\uhs/\varGamma$ has  at least two critical points. 
\end{proof} 

Theorem \ref{main1} now follows immediately from Theorem \ref{criticalpoint}, given the fact that any quasi-Fuchsian group is a 
deformation of a Fuchsian one. To prove Theorem \ref{main2}, we merely need to observe that we can simultaneously deform  any quantizable 
configuration  as we deform the relevant Fuchsian group into a given quasi-Fuchsian one. Indeed, suppose that we have a family 
 of quasi-Fuchsian groups smoothly parameterized by the closed interval $[0,1]$. For any specific $t\in [0,1]$,  Lemma \ref{expansion} implies 
 that there is  some $\delta$ such that the tunnel-vision function 
${\zap f} : X\to (0,1)$ does not have any critical values in $(0,\delta)\cup (1-\delta , 1)$, and
we may moreover choose this $\delta$ to be independent of
$t\in [0,1]$ by compactness. By following the gradient flow of $\sqrt{{\zap f}(1-{\zap f})}$ on $\overline{X}$, 
we may therefore construct, for each $t$,  a diffeomorphism between ${\zap f}^{-1}[ (0,\delta)\cup (1-\delta , 1)]$ and $\Sigma \times [ (0,\delta)\cup (1-\delta , 1)]$
such   that ${\zap f}$ becomes projection to the second factor;  moreover,   the diffeomorphism constructed in this way will then  also smoothly  depend on $t$. 
We now assume that $X$ is Fuchsian when $t=0$, so that ${\zap f}$ initially has no critical points. Recall that the quantization condition 
on $\{ p_1, \ldots , p_k\}$ amounts to saying  that  $\sum {\zap f}(p_k) = \ell$
 for some integer $\ell$ with $0< \ell < k$. By first smoothly varying
our configuration within the original Fuchsian $X$, we may then first arrange  that ${\zap f} > 1-\delta$ for $\ell$ of the points, and that ${\zap f}< \delta$ for the rest. 
Once this is done, we may then just consider our configuration as a subset of $\Sigma \times [ (0,\delta)\cup (1-\delta , 1)]$, and our family of diffeomorphisms 
will then  carry $\{ p_1, \ldots , p_k\}$  along as a family of quantizable configuration as we vary $t\in [0,1]$. Theorem \ref{main2} is now an immediate consequence.

\bigskip

Let us  now conclude our discussion with a few comments concerning  Theorem \ref{criticalpoint}.
First of all, 
 the restriction to very large genus ${\zap g}$  appears to be an inevitable limitation  of our method of proof,
because for any fixed genus the possible limit sets $\Lambda (\varGamma)$ only depend on a finite number of parameters, 
and so cannot provide arbitrarily good approximations of a freely specified  curve. On the other hand, the requirement that  
the genus ${\zap g}$ be even is merely a technical convenience. For example, after replacing our dogbone regions with analogous 
domains that are invariant under $\ZZ_p$ for some prime $p$ other than $2$, a similar argument then shows 
that the same phenomenon occurs for all genera ${\zap g}$ that are sufficiently large multiples of $p$. 

In fact, there is a more robust version of our  strategy that should lead to a  proof of  the existence of critical points
without imposing a symmetry condition. Given an $\Omega_+$ that approximates a   dogbone with an extremely narrow corridor between the 
two disks, we would like to understand  the level sets
$S_t$ of the corresponding $\widetilde{\zap f}: \uhs\to (0,1)$. When $t$ is a regular value close to $1$, 
 this surface hugs the boundary of hyperbolic 
space  and is a 
close approximation to the bounded region $\Omega_+$. Now, if $\Omega_+$ were exactly a limiting dogbone consisting of 
 two disks joined by a ``corridor'' of  width zero, one could write down the surfaces in closed form; 
when $t$ is close to $1$,  the surfaces $S_t$ then consists of two ``bubbles'' over the disks, 
but as $t$ decreased these two bubbles  eventually touch and then merge by adding a handle
 joining the two original components. We expect this change of topology to  survive
 small perturbations of the regions in question,  so that one should  be able to prove the existence of a
 critical point by a careful Morse-theoretic argument, just assuming that $\Omega_+$ closely approximates a dogbone
 with a very narrow corridor between the disks. 
A careful implementation 
 of this argument would then enable us to remove the even-genus restriction of Theorem \ref{criticalpoint}. 
 However, we will not attempt to supply all the details in this paper. 
 
 Another issue we have not addressed here is whether  some of our critical points of the tunnel-vision function 
are    non-degenerate in the sense of Morse. This is intrinsically 
 interesting  from the standpoint  of $4$-manifolds, because, as long as  the critical point does not belong
 to the quantizable configuration $\{ p_1, \ldots , p_k\}$,  it 
 is equivalent  to asking whether the corresponding harmonic $2$-form is transverse to the 
 zero section. When such  transversality occurs, it  is stable  is  under small perturbations of the metric,
 and the failure of the metric to be conformally almost-K\"ahler  then persists under small 
 deformations of the conformal class.  
As a matter of fact, we have actually discovered a way to 
 arrange for at least one of our critical points to be  non-degenerate in the sense of Morse. Indeed, if the corridor of our dogbone is so narrow as to 
 be negligeable, direct computation in the spirt of the last paragraph shows that one obtains a critical point 
 at which the Hessian is non-degenerate, and, as long the corridor remains sufficiently thin,  one can therefore show that   the ``second'' critical point of
 our main argument will remain in the region where the determinant of the Hessian is non-zero. Approximation of the relevant dogbone curve 
 by limit sets then leads to quasi-Fuchsian $3$-manifolds for which the tunnel vision function has at least one
 non-degenerate critical point. Precise details will appear elsewhere.

 Finally, while we have proved the existence of two critical points in certain specific 
 situations, it seems natural to expect that there might be many more when the
conformal structures on the  two boundary components of $\overline{X}/\varGamma$ become
widely separated in  Teichm{\"u}ller space. When this happens, the limit set  becomes very chaotic,
and a better understanding of the tunnel-vision function in this setting might eventually   allow one  
to prove the existence of critical points even when the genus ${\zap g}$ is small. 
Conversely, it would be interesting to characterize those quasi-Fuchsian manifolds for
which the tunnel-vision function has relatively few critical points; in particular, 
it would obviously be of great interest to have a precise characterization of those $\varGamma$ for which the
tunnel-vision function has   no critical points at all.

Indeed,  while  this article has used Theorem \ref{pika}  to construct  anti-self-dual deformations of scalar-flat K\"ahler manifolds that are not conformally almost-K\"ahler, 
 one could  in principle use this same result  to instead construct specific examples that {\em are} conformally almost-K\"ahler.
For example, it would be interesting to thoroughly  understand the tunnel-vision functions 
 of the {\em nearly Fuchsian} $3$-manifolds $X$ first studied by Uhlenbeck \cite{karen1}. By definition, these are the quasi-Fuchsian $3$-manifolds of Bers type that contain  
a   compact minimal surface
$\Sigma\subset X$ of Gauss curvature $> -2$. Is  the tunnel-vision function of  a nearly Fuchsian manifold always free of  critical points? If so, 
 then Theorem \ref{pika} would imply that the anti-self-dual $4$-manifolds arising from these special 
quasi-Fuchsian manifolds are always conformally almost-K\"ahler.

In these pages, we  have just scratched the surface of a fascinating subject with 
deep connections to many natural questions in geometry and topology. We can only 
hope that some interested reader will take up the challenge, and address a few 
of the questions we have left unanswered here. 

\pagebreak
%
%\bibliographystyle{siam}
%\bibliography{lebrun}

\end{document}